\journal{Journal of Graph Theory}
\newcommand{\td}{\ensuremath{\operatorname{td}}\xspace} %
\newcommand{\nd}{\ensuremath{\operatorname{nd}}\xspace} %
\newcommand{\cvdn}{\ensuremath{\operatorname{cvdn}}\xspace} %
\newcommand{\mw}{\ensuremath{\operatorname{mw}}\xspace} %
\newcommand{\mim}{\ensuremath{\operatorname{mim}}\xspace} %
\newcommand{\vccn}{\ensuremath{\operatorname{vccn}}\xspace} %
\newcommand{\eccn}{\ensuremath{\operatorname{eccn}}\xspace} %
\begin{document}

\title{Density of Traceable Graphs}

\author[1]{Michal Dvořák}
\author[1]{Dušan Knop}
\author[1]{Michal Opler}
\author[1]{Jan Pokorný}
\author[1]{Ondřej Suchý}
\author[1]{Krisztina Szilágyi}

\authormark{Michal Dvořák, Dušan Knop, Michal Opler, Jan Pokorný, Ondřej Suchý, Krisztina Szilágyi}
\titlemark{Density of Traceable Graphs}

\address[1]{\orgdiv{Faculty of Information Technology}, \orgname{Czech Technical University in Prague}, \orgaddress{\state{Czech Republic}, \country{Prague}}}

\corres{
\email{michal.dvorak@fit.cvut.cz}
\email{dusan.knop@fit.cvut.cz}
\email{michal.opler@fit.cvut.cz}
\email{jan.pokorny@fit.cvut.cz}
\email{ondrej.suchy@fit.cvut.cz}
\email{krisztina.szilagyi@fit.cvut.cz}}

\fundingInfo{%
European Union under the project Robotics and advanced industrial production (reg. no. CZ.02.01.01/00/22\_008/0004590).}

\abstract[Abstract]{
    In this paper, we consider the minimum number of edges of traceable graphs, i.e. graphs that have a Hamiltonian path, for graphs that have a specific structure. Clearly, if we do not impose any additional restrictions, the minimum number of edges of an $n$-vertex traceable graph is $n-1$. If we restrict our attention to traceable graphs which have additional properties, e.g. bounded neighborhood diversity, we obtain a larger bound on the number of edges. 
    More precisely, we consider several structural graph parameters and ask the following question: \textit{What is the minimum number of edges an $n$-vertex graph has to have if it is traceable and has a bounded parameter $d$?}

    We show the following tight upper and lower bounds:
    \begin{itemize}[leftmargin = *, rightmargin = 20em]
        \item quadratic for the class of graphs of bounded neighborhood diversity, bounded size of maximum induced matching, or bounded cluster vertex deletion number;
        \item $n\log n$ for the class of cographs or, more generaly, bounded modular-width, and for the class of bounded distance to cograph; and
        \item sligthly superlinear for the class of bounded shrub-depth.
    \end{itemize}
}

\keywords{traceable graphs, Hamiltonian path, density, modular-width, shrub-depth, cographs, neighborhood diversity, maximum induced matching}

\jnlcitation{\cname{%
\author{Taylor M.},
\author{Lauritzen P},
\author{Erath C}, and
\author{Mittal R}}.
\ctitle{On simplifying ‘incremental remap’-based transport schemes.} \cjournal{\it J Comput Phys.} \cvol{2021;00(00):1--18}.}

\maketitle

\section{Introduction}

Extremal combinatorics is an area of discrete mathematics concerned with determining the maximum or minimum size of a collection of finite objects that satisfies certain properties. In graph theory, a  classical result of Turán~\cite{turan1941extremal} states a tight bound on the number of edges a graph can contain while avoiding the complete graph as a subgraph. An extension of this result is the Erdős–Stone theorem which proves similar tight bounds for non-complete graphs~\cite{ErdosStone1946}.

Another direction one can consider is the opposite question: \emph{What is the \textbf{minimum} number of edges a graph has to have while possesing a certain property?} As a trivial example, every connected graph has to contain at least $n-1$ edges and this bound is tight, i.e., there are connected graphs with $n-1$ edges, namely trees. A slightly more interesting notion is the one of \emph{saturation number} introduced by Erdős, Hajnal and Moon in 1964~\cite{ErdosHajnalMoon1964}, which asks, for a given fixed graph $H$ and positive integer $n$: \emph{What is the minimum number of edges an $n$-vertex graph can have such that it does not contain $H$ but adding any edge to it will create a copy of $H$?}. Since then, there have been many results and generalizations of this notion~\cite{Bollobas1968onsaturatedgraphs,FanW15satlinforests,FaudreeFGJ09saturationtrees,FaudreeG13satnearlycomplete,FerraraJMTW12saturationsubdivisions,FurediK13cyclesat,KaszonyiTuza1986saturatedgr,LvHL23satdisjointstars,SullivanW17sattripartite}.

In our setting, we lay two conflicting properties against each other and ask what is the minimum number of edges an $n$-vertex graph can have. On the one hand, we require that the graph possesses some kind of specific structure. Typically, a class capturing a structure of graphs which is of interest for us (e.g., bounded neighborhood diversity, see \Cref{sec:preliminaries} for a formal definition) contains cliques and is closed under complementation. On the other hand, to avoid graphs with too few edges, we require a conflicting property of the graph -- in our case we require that the graph is traceable, i.e., it contains a Hamiltonian path. 

Note that we aim for superlinear bounds for the number of edges. It is immediate that a traceable graph contains $\Omega(n)$ edges as it contains at least $n-1$ edges due to the Hamiltonian path itself. An interesting question is whether it is necessarily \emph{forced} to contain, e.g., $\Omega(n\log n)$ or $\Omega(n^{3/2})$ edges by employing the specific structure.

As a warmup example, consider the class of \emph{cluster graphs}. A graph is a \emph{cluster graph} if every connected component is a clique. Such class clearly contains the edgeless graph on any number of vertices and it also contains cliques, so the number of edges in such graphs can attain both extreme cases: $0$ and $\binom{n}{2}$. However, if we also require the cluster graph to contain a Hamiltonian path, we obtain (up to isomorphism) one graph on $n$ vertices with such properties, namely the complete graph.

For a slightly less trivial example, consider the class of cographs. Again, an edgeless graph is a cograph, but if we require a cograph to also contain a Hamiltonian path, it necessarily contains $\Omega(n\log n)$ edges (as we prove in \Cref{thm:lower_bound_cograph}). On the other hand, this bound is tight as there exists an infinite family of traceable cographs with $O(n\log n)$ edges (\Cref{thm:upper_bound_cograph}).

For some classes, there is a trivial bound, since they contain finitely many traceable graphs. For example, if we consider traceable graphs of bounded tree-depth, 
 there are finitely many such graphs as any such graph can contain at most $2^{\td}$ vertices~\cite{SPARSITY}. More examples include graphs of bounded vertex cover number (such graphs also have bounded tree-depth). 

The problem also becomes trivial if the class contains $n$-vertex paths for infinitely many $n$, such as the class of planar graphs, interval graphs, or graphs of bounded max-leaf number, treewidth, or clique-width (which generalizes the classes of bounded shrub-depth and bounded modular-width). Similarly, by adding a universal vertex to a path, we obtain a traceable graph with $\Theta(n)$ edges with bounded domination number and bounded diameter, giving no hope for a superlinear bound.

\subparagraph*{Our Contribution.}
We show tight bounds on the number of edges in a graph $G$, given that it contains a Hamiltonian path and belongs to a specific graph class. In particular, we give tight results for the following graph classes (see \Cref{tab:density_results} and \Cref{fig:results_hierarchy}): class of graphs with bounded distance to cocluster graphs, class of graphs with bounded maximum induced matching, class of graphs with bounded neighborhood diversity, class of cographs, class of graphs with bounded modular-width, class of graphs with bounded shrub-depth, class of graphs with bounded cluster vertex deletion number, and class of graphs with bounded distance to cograph.

\begin{table}[th!]
    \centering
\begin{tabular}{|c|c|c|c|}
\toprule
     graph class & bound & lower bound & upper bound \\\midrule
     bounded distance to cocluster & $\Theta \left(n^2\right)$ & Thm. \ref{thm:lower_bound_dist_to_cocluster} & trivial\\\hline
     bounded maximum induced matching & $\Theta\left(\frac{n^2}{d}\right)$ & Thm. \ref{thm:lower_bound_maximum_induced_matching} & Cor. \ref{cor:upper_bound_maximum_induced_matching}\\\hline
     bounded neighborhood diversity & $\Theta\left(\frac{n^2}{d}\right)$ & Thm. \ref{thm:lower_bound_nd} & Cor. \ref{cor:upper_bound_nd} \\\hline
     cograph & $\Theta(n\log n)$ & Thm. \ref{thm:lower_bound_cograph} & Thm. \ref{thm:upper_bound_cograph} \\\hline
     bounded modular-width & $\Theta(n\log_d n)$ & Thm. \ref{thm:lower_bound_modular_width} & Thm. \ref{thm:upper_bound_modular_width}\\\hline
     bounded shrub-depth & $\Theta(n^{1+\frac{1}{2^d - 1}})$ & Thm. \ref{thm:lower_bound_sd} & Thm. \ref{thm:upper_bound_sd}\\\hline
     bounded cluster vertex deletion number & $\Theta\left(\frac{n^2}{d}\right)$& Cor. \ref{cor:lower_bound_cvdn} & Cor. \ref{cor:upper_bound_cvdn} \\\hline
     bounded distance to cograph & $\Theta(n\log \frac{n}{d})$ & Cor. \ref{cor:lower_bound_dtcog} & Cor. \ref{cor:upper_bound_dtcog} \\
     \bottomrule
\end{tabular}
    \caption{Overview of our results. First column indicates the considered graph class. Second column describes the asymptotic bound on the number of edges given that the graph is from the given class and is traceable. If the class is described by a parameter, then $d$ is the corresponding upper bound on the parameter. Third and fourth column indicate a reference to the theorem, where the corresponding lower or upper bound is proven for the given class, respectively. 
    }
    \label{tab:density_results}
\end{table}
\begin{figure}
    \centering
		\begin{tikzpicture}
			\tikzstyle{box} = [draw=none,
			fill=gray!10,
			minimum height=2em,
			minimum width=3.5em,
			align=center,
            rounded corners,
			draw=black]
			\tikzstyle{proven} = [fill=green!15]
			\tikzstyle{proving} = [fill=red!15]
			\tikzstyle{txt}=[draw=none,fill=none]
			
			\def\w{2.8}
			\def\h{1.3}
			\node[box,fill=red!15] (clique) at (\w*0,-\h*3) { Clique};
			\node[box] (cluster) at (-\w*1,-\h*2){Cluster};
			\node[box] (cocluster) at (-\w*2,-\h*2){Cocluster};
            \node[box] (dtococluster) at (-\w*2.5,-\h*1) {bnd distance to Cocluster};
			\node[box,fill=red!15] (dtc) at (\w*1,-\h*2){bnd distance to Clique};
			\node[box] (cvdn) at (\w*0,\h*2){bnd cvdn};
			\node[box] (nd) at (\w*2,\h*2){bnd nd};
			\node[box] (mim) at (\w*1,\h*2){bnd mim};
			\node[box,fill=red!15] (eccn) at (\w*1.5,\h*0){bnd \eccn};
            \node[box,fill=red!15] (vccn) at (\w*1,\h*1){bnd \vccn};

			\node[box] (sd) at (\w*2,\h*3.8){bnd shrub-depth};
			
			\node[box] (dtcog) at (-\w*2,\h*6){bnd distance to Cograph};
			
			\node[box] (cograph) at (-\w*1,\h*5){Cograph};
			
			\node[box] (mw) at (\w*0,\h*6){bnd mw};

            \draw[->] (dtococluster) to[bend left] (sd);
            \draw[->] (cvdn) to (sd);
            \draw[->] (cocluster) to (dtococluster);
            \draw[->] (dtococluster) to (dtcog);
            \draw[->] (clique) to (cocluster);
            \draw[->] (cocluster) to (cograph);
			\draw[->] (cluster) to (cograph);
			\draw[->] (clique) to (cluster);
			\draw[->] (cluster) to (cvdn);
			\draw[->] (clique) to (dtc);
			\draw[->] (dtc) to (cvdn);
			
			\draw[->] (dtc) to (eccn);
			
			\draw[->] (eccn) to (vccn);
            \draw[->] (eccn) to (nd);
            \draw[->] (vccn) to (mim);
			
			\draw[->] (nd) to (sd);
			
			\draw[->] (cvdn) to (dtcog);
			
			\draw[->] (cograph) to (dtcog);
			
			\draw[->] (cograph) to (mw);
			
			\draw[->] (nd) to (mw);
			
			\draw[dashed] (-\w*3.1,\h*4.5) to (\w*3,\h*4.5);

			\draw[dashed] (-\w*3.1,\h*3) to (\w*3,\h*3);

            \draw[dashed] (-\w*3.1,-\h*0.5) to (\w*3,-\h*0.5);

            \node[txt] at (7.5,-1){\large $n^2$};
            
			\node[txt] at (7.5,3){\large $\frac{n^2}{d}$};
			
			\node[txt] at (7.5,5){\large $n^{1+\varepsilon}$};
			
			\node[txt] at (7.5,7){\large $n\log n$};
			
		\end{tikzpicture}
    \caption{Graphical overview of our results. Each box represents a graph class and an arrow from $\mathcal{G}$ to $\mathcal{H}$ means $\mathcal{G}\subseteq \mathcal{H}$. Therefore, lower bounds propagate down, whereas upper bounds propagate up. The bounds on the right represent the asymptotic number of edges given that a graph $G$ is from the given class and contains a Hamiltonian path.    
    Graphs from graph classes highlighted in light red contain such number of edges even without a Hamiltonian Path. All the other graph classes contain for any $n$ the edgeless graph on $n$ vertices, hence the presence of the Hamiltonian Path is necessary to obtain these bounds. Precise bounds can also be found in \Cref{tab:density_results}.}
    \label{fig:results_hierarchy}
\end{figure}
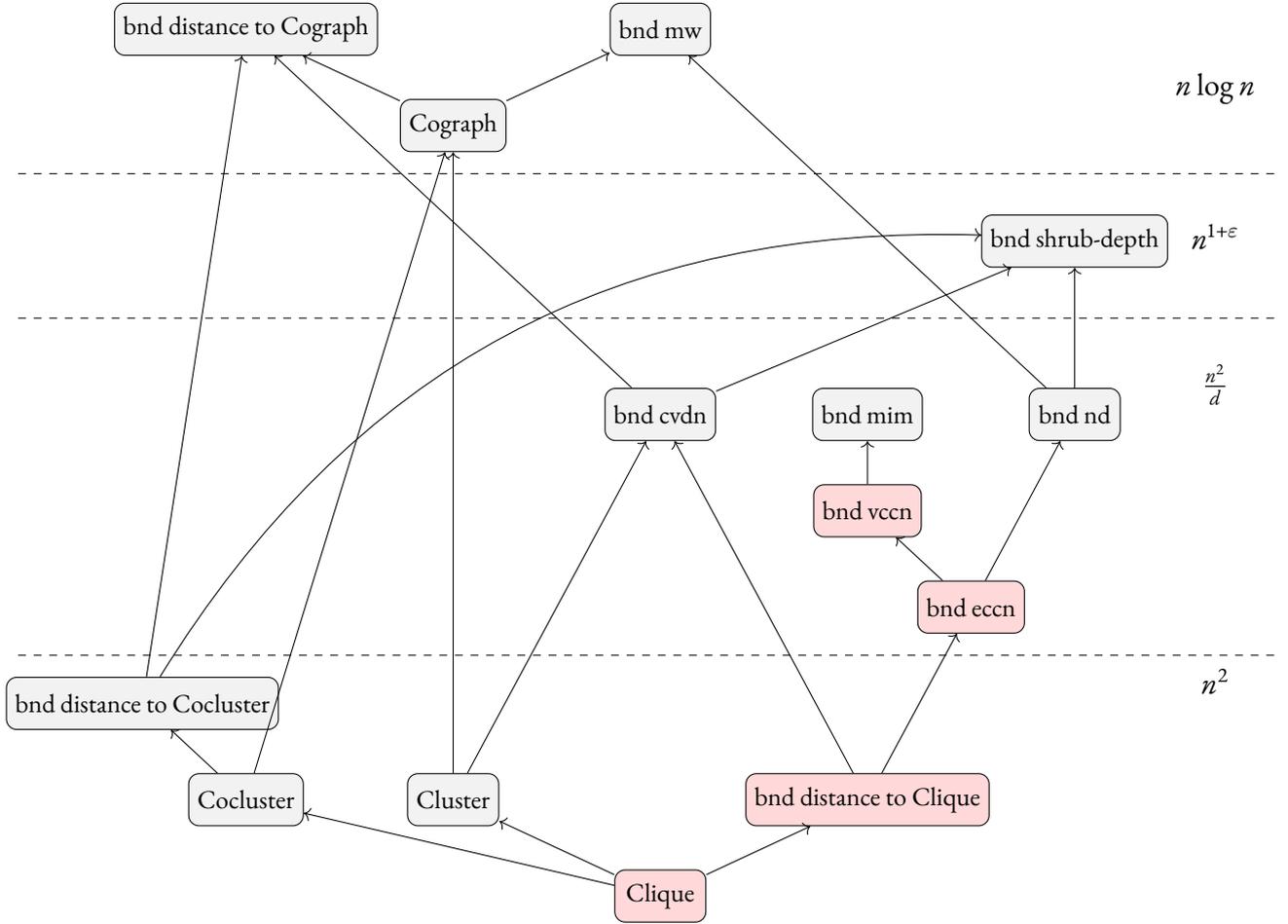

\section{Preliminaries}\label{sec:preliminaries}
We consider simple undirected graphs. A $v_1$-$v_k$ path is a sequence
$(v_1,e_1,v_2,\ldots,e_{k-1},v_k)$
where vertices and edges alternate and $e_i=\{v_i,v_{i+1}\}$. A Hamiltonian path in $G$ is a path containing all vertices of the graph. Graph $G$ is \emph{traceable} if it contains a Hamiltonian path.
A \emph{matching} in $G$ is $1$-regular subgraph of $G$. An \emph{induced} matching in $G$ is a matching that is also an induced subgraph of $G$. The size of a  maximum induced matching of $G$ is denoted $\mim(G)$ and is equal to the number of edges in the matching. A graph $G$ is a \emph{cluster} if every connected component of $G$ is a clique. A graph is a \emph{cocluster} if its complement is cluster, in other words, if it is a complete multipartite graph. Graph $G$ is said to be $H$-free if it does not contain $H$ as an induced subgraph. We use the following well-known result of Turán.
\begin{theorem}[Turán~\cite{turan1941extremal}]
    Any $K_{d+1}$-free graph $G$ with $n$ vertices has at most $\left(1-\frac{1}{d}\right)\cdot\frac{n^2}{2}$ edges.
\end{theorem}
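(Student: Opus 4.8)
The plan is to prove the stated inequality by an extremal argument based on Zykov's symmetrization, which reduces everything to complete multipartite graphs where the count is a one-line convexity estimate. First I would fix $n$ and $p$ and pass to an extremal graph: among all $K_{p+1}$-free graphs on $n$ vertices, let $G$ be one with the maximum number of edges. Since every $K_{p+1}$-free graph has at most $e(G)$ edges, it suffices to bound $e(G)$.

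The heart of the argument is to show that such an extremal $G$ is complete multipartite, which I would do by proving that the non-adjacency relation on $V(G)$ is an equivalence relation; reflexivity and symmetry are immediate, so the content is transitivity. Suppose for contradiction there are vertices $a,b,c$ with $ab,bc \notin E(G)$ but $ac \in E(G)$. If $b$ does not have the largest degree among the three — say $\deg(a) > \deg(b)$ — I delete $b$ and replace it by a twin $a'$ of $a$, a new vertex whose neighborhood is exactly $N(a)$. Because $a$ and $a'$ are non-adjacent with identical neighborhoods, every clique meets $\{a,a'\}$ in at most one vertex, so the modified graph is still $K_{p+1}$-free, yet its edge count grows by $\deg(a)-\deg(b) > 0$, contradicting maximality. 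Otherwise $b$ has the (weakly) largest degree, and I delete both $a$ and $c$ and insert two twins of $b$; this keeps the graph $K_{p+1}$-free, since $b$ and its two copies are then pairwise non-adjacent with the same neighborhood, while the edge count changes by $2\deg(b)-\deg(a)-\deg(c)+1 > 0$ (the $+1$ compensating the doubly counted edge $ac$), again contradicting maximality. Hence non-adjacency is transitive, so its classes partition $V(G)$ into independent sets $V_1,\dots,V_k$ with all edges present between distinct parts; that is, $G = K_{n_1,\dots,n_k}$ with $n_i = |V_i|$.

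It remains to count. A complete multipartite graph with $k$ parts contains a $K_k$, so $K_{p+1}$-freeness forces $k \le p$. Writing $e(G) = \binom{n}{2} - \sum_{i=1}^{k}\binom{n_i}{2} = \tfrac12\bigl(n^2 - \sum_{i=1}^{k} n_i^2\bigr)$ and applying Cauchy–Schwarz (equivalently, convexity of $x \mapsto x^2$) gives $\sum_{i=1}^{k} n_i^2 \ge \tfrac1k\bigl(\sum_i n_i\bigr)^2 = \tfrac{n^2}{k} \ge \tfrac{n^2}{p}$, whence $e(G) \le \tfrac12\bigl(n^2 - \tfrac{n^2}{p}\bigr) = \bigl(1-\tfrac1p\bigr)\tfrac{n^2}{2}$, as claimed.

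I expect the main obstacle to be the symmetrization step: one must check carefully that neither duplication move ever creates a $K_{p+1}$ and that the case split on the three degrees is genuinely exhaustive, so that maximality truly forces transitivity. An alternative route that sidesteps symmetrization is induction on $n$: an extremal graph contains a $K_p$ on a vertex set $A$, each remaining vertex has at most $p-1$ neighbors in $A$, and $G - A$ is again $K_{p+1}$-free, so the inductive hypothesis bounds its edges; summing the three contributions recovers the bound, though tracking the exact extremal count makes the arithmetic heavier than the convexity estimate above.
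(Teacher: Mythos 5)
The paper does not prove this statement at all---it is quoted as a classical result with a citation to Tur\'an, and then used as a black box in the proof of Theorem~\ref{thm:lower_bound_maximum_induced_matching}---so there is no in-paper argument to compare against. Your Zykov symmetrization proof is correct and complete. The two delicate points you flag are indeed the right ones, and you handle both properly: the duplication moves preserve $K_{p+1}$-freeness because a clique can contain at most one vertex from a set of pairwise non-adjacent twins (so any clique in the modified graph maps to one of equal size in $G$), and the case split is exhaustive since either some vertex in $\{a,c\}$ has strictly larger degree than $b$ or $b$ weakly dominates both. The edge-count bookkeeping is also right, including the $+1$ for the edge $ac$ being removed once rather than twice, which makes the second move a strict improvement and yields the contradiction. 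The final convexity step $\sum_i n_i^2 \ge n^2/k \ge n^2/p$ gives exactly the stated bound $\left(1-\frac{1}{p}\right)\frac{n^2}{2}$ (which is the slightly weaker ``continuous'' form of Tur\'an's theorem, sufficient for this paper's purposes, so there is no need to identify the exact extremal graph $T(n,p)$). Your alternative inductive sketch would also work but is unnecessary here.
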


We also use the following well-known inequality by Jensen.

\begin{lemma}[Jensen's Inequality, Jensen~\cite{Jensen1906}]
    Let $\varphi\colon \mathbb{R}\to \mathbb{R}$ be a convex function, then for any $x_1,x_2,\ldots,x_\ell\in\mathbb{R}$ we have ${\sum_{i=1}^\ell \varphi(x_i)\geq \ell \cdot \varphi\left(\frac{\sum_{i=1}^\ell x_i}{\ell}\right)}$.
\end{lemma}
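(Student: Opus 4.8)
The plan is to prove the statement by induction on $\ell$, using the very definition of convexity as the engine. Recall that $\varphi$ being convex means that for all $x,y\in\mathbb{R}$ and all $\lambda\in[0,1]$ we have $\varphi(\lambda x+(1-\lambda)y)\le \lambda\varphi(x)+(1-\lambda)\varphi(y)$. Dividing the claimed inequality through by $\ell$, it suffices to establish the equivalent averaged form $\varphi\!\left(\frac{1}{\ell}\sum_{i=1}^\ell x_i\right)\le \frac{1}{\ell}\sum_{i=1}^\ell \varphi(x_i)$.

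For the base case $\ell=1$ both sides equal $\varphi(x_1)$, and the case $\ell=2$ is exactly the definition of convexity with $\lambda=\tfrac12$. For the inductive step, assume the averaged form holds for $\ell-1$ and set $m=\frac{1}{\ell}\sum_{i=1}^\ell x_i$ and $m'=\frac{1}{\ell-1}\sum_{i=1}^{\ell-1}x_i$. The key observation is the algebraic identity $m=\frac{\ell-1}{\ell}\,m'+\frac{1}{\ell}\,x_\ell$, which exhibits $m$ as a convex combination of $m'$ and $x_\ell$. Applying the definition of convexity to this combination, and then the induction hypothesis to $x_1,\dots,x_{\ell-1}$, gives
\[
\varphi(m)\le \tfrac{\ell-1}{\ell}\varphi(m')+\tfrac1\ell\varphi(x_\ell)\le \tfrac{\ell-1}{\ell}\cdot\tfrac{1}{\ell-1}\sum_{i=1}^{\ell-1}\varphi(x_i)+\tfrac1\ell\varphi(x_\ell)=\tfrac1\ell\sum_{i=1}^\ell\varphi(x_i),
\]
and multiplying by $\ell$ recovers the claim.

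Alternatively, one can give a non-inductive proof via a supporting line: since $\varphi$ is convex on $\mathbb{R}$, at the mean $m$ there is an affine function $L(x)=ax+b$ with $L(m)=\varphi(m)$ and $\varphi(x)\ge L(x)$ for every $x$. Summing $\varphi(x_i)\ge L(x_i)=ax_i+b$ over $i$ and using $\sum_i x_i=\ell m$ yields $\sum_i\varphi(x_i)\ge a\ell m+\ell b=\ell L(m)=\ell\varphi(m)$ immediately.

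I do not expect any serious obstacle, as this is a classical inequality; the only point requiring a little care is justifying the single structural fact each route rests on. In the inductive argument this is the decomposition of $m$ as a convex combination of the partial mean $m'$ and $x_\ell$, which stays valid for every $\ell\ge 2$. In the supporting-line argument it is the existence of a supporting line for a convex function on $\mathbb{R}$, which follows from the fact that such functions admit one-sided derivatives everywhere and lie above any resulting tangent. I would favor the inductive argument for a self-contained write-up, since it invokes nothing beyond the definition of convexity stated above.
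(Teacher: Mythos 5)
Your proof is correct. The paper does not prove this lemma at all --- it is stated as a known classical result with a citation to Jensen (1906) --- so there is no in-paper argument to compare against. Both of your routes are standard and sound: the induction rests only on the identity $m=\frac{\ell-1}{\ell}m'+\frac{1}{\ell}x_\ell$ being a genuine convex combination (coefficients nonnegative, summing to $1$), which holds for every $\ell\ge 2$; the supporting-line argument is equally valid since a convex function on all of $\mathbb{R}$ admits one-sided derivatives and hence a supporting affine minorant at the mean. Your preference for the inductive version as the self-contained write-up is reasonable, as it uses nothing beyond the two-point definition of convexity.
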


The \emph{vertex (edge) clique cover number} of a graph $G$, denoted $\vccn(G)$ ($\eccn(G)$), is the minimum number of complete subgraphs of~$G$ such that every vertex (edge) is contained in one of the subgraphs. 
If $G$ has no isolated vertices, then every edge clique cover is also a vertex clique cover, hence $\vccn(G)\leq \eccn(G)$ for every such $G$.

The \emph{neighborhood diversity} of a graph $G$, denoted $\nd(G)$, is the smallest integer $d$ such that there exists a partition of $V$ into $d$ sets $V_1,\ldots,V_d$ such that for any $i\in[d]$ and $u,v\in V_i$ it holds $N(u)\setminus \{v\}=N(v)\setminus \{u\}$. In other words, the sets $V_i$ are either independent or cliques and for any two distinct $V_i,V_j$ either we have for every $v_i\in V_i,v_j\in V_j$ that $\{v_i,v_j\}\in E(G)$ or for every $v_i\in V_i,v_j\in V_j$ that $\{v_i,v_j\}\notin E(G)$.

\subparagraph*{Modular-width}
The following definition is from~\cite{GajarskyLO2013}.
Consider an algebraic expression $A$ that uses the following operations:
\begin{enumerate}[label=(O\arabic*)]
    \item create an isolated vertex;\label{itm:modular_width_O1}
    \item take the \emph{disjoint union} of graphs $G_1,G_2$, denoted by $G_1\oplus G_2$, which is the graph with vertex set $V(G_1)\cup V(G_2)$ and edge set $E(G_1)\cup E(G_2)$;\label{itm:modular_width_O2}
    \item take the \emph{complete join} of $2$ graphs $G_1$ and $G_2$, denoted by $G_1\otimes G_2$, which is the graph with vertex set $V(G_1)\cup V(G_2)$ and edge set $E(G_1)\cup E(G_2)\cup \{\{v,w\}\mid v\in V(G_1)\wedge w\in V(G_2)\}$;\label{itm:modular_width_O3}
    \item for graphs $G_1,\ldots, G_n$ and a pattern graph $G$ with vertices $v_1,\ldots,v_n$ perform the \emph{substitution} of the vertices of $G$ by the graphs $G_1,\ldots, G_n$, denoted by $G(G_1,\ldots, G_n)$, which is the graph with vertex set $\bigcup_{i=1}^n V(G_i)$ and edge set $\bigcup_{i=1}^n E(G_i)\cup \{\{u,v\}\mid u \in V(G_i)\wedge v \in V(G_j)\wedge \{v_i,v_j\}\in E(G)\}$. Hence, $G(G_1,\ldots, G_n)$ is obtained from $G$ by replacing every vertex $v_i\in V(G)$ with the graph $G_i$ and adding all edges between vertices of a graph $G_i$ and the vertices of a graph $G_j$ whenever $\{v_i,v_j\}\in E(G)$.\label{itm:modular_width_O4}
\end{enumerate}
The \emph{width} of the expression $A$ is the maximum number of vertices of a pattern graph used by any occurrence of the operation \ref{itm:modular_width_O4} in~$A$ (or $0$ if \ref{itm:modular_width_O4} does not occur in $A$). The \emph{modular-width} of a graph~$G$, denoted $\operatorname{mw}(G)$, is the smallest integer $m$ such that $G$ can be obtained from such an algebraic expression of width at most $m$. Note that the operations \ref{itm:modular_width_O2} and \ref{itm:modular_width_O3} can be seen as a special case of \ref{itm:modular_width_O4} with graphs $K_2$, resp. $\overline{K_2}$. 
The graphs $G_1,\ldots, G_n$ which become the subgraphs of $G(G_1,\ldots, G_n)$ are referred to as \emph{modular partition} of $G(G_1,\ldots, G_n)$.

A graph is a \emph{cograph} if and only if it can be made by operations \ref{itm:modular_width_O1}, \ref{itm:modular_width_O2}, and \ref{itm:modular_width_O3}. Or, equivalently, a graph is a cograph if and only if it has modular-width~$0$.

\subparagraph*{Shrub-depth} To define shrub-depth we first need the following concept.
\begin{definition}[Tree-model~\cite{Ganian2017ShrubdepthCH}]
    Let $m$ and $d$ be non-negative integers. A \emph{tree-model of $m$ colours and depth $d$} (or a \emph{$(d,m)$-tree-model} for short) for a graph $G$ is a pair $(T,S)$ of a rooted tree $T$ (of height $d$) and a set $S\subseteq [m]^2\times [d]$ (called a \emph{signature} of the tree-model) such that
    \begin{enumerate}
        \item the length of each root-to-leaf path in $T$ is exactly $d$,
        \item the set of leaves of $T$ is exactly the set $V(G)$ of vertices of $G$,
        \item each leaf of $T$ is assigned one of the colours in $[m]$, and
        \item for any $i,j,\ell$ it holds that $(i,j,\ell)\in S\Leftrightarrow (j,i,\ell)\in S$ (symmetry in the colours), and 
        \item for any two vertices $u,v\in V(G)$ and any $i,j,\ell$ such that $u$ is coloured $i$ and $v$ is coloured $j$ and the distance between $u,v$ in $T$ is $2\ell$, the edge $\{u,v\}$ exists in $G$ if and only if $(i,j,\ell)\in S$.
    \end{enumerate}
\end{definition}
\begin{definition}[Shrub-depth]
    A class $\mathcal{G}$ of graphs has \emph{shrub-depth} at most $d$ if there exists~$m$ such that each $G\in\mathcal{G}$ admits a $(d,m)$-tree-model.
\end{definition}

\subparagraph*{Distance to $\Pi$}
Let $\Pi$ be a graph class. A set $S\subseteq V$ is a \emph{modulator} to $\Pi$ if $G\setminus S\in \Pi$. For a graph $G$, the \emph{distance to $\Pi$ of $G$} is the size of the smallest modulator to $\Pi$ and we denote it $\operatorname{dist}_{\Pi}(G)$. The \emph{cluster vertex deletion number} (\cvdn) is the distance to cluster graphs.

\section{Results}\label{sec:results}
In this section we prove our results. In \Cref{subsec:quadratic_bounds} we prove the quadratic bounds for graphs of bounded distance to cocluster, graphs of bounded maximum induced matching, and for graphs of bounded neighborhood diversity. In \Cref{subsec:nlogn_bounds} we focus on the $n\log n$ bounds for the classes of cographs and graphs of bounded modular-width. In \Cref{subsec:shrub_depth} we prove the $n^{1+\varepsilon}$ bound for graphs of bounded shrub-depth. Finally, in \Cref{subsec:distance_to_pi} we prove the bounds for classes characterized by having bounded distance to $\Pi$ for some property $\Pi$.

\subsection{$n^2$ bounds}\label{subsec:quadratic_bounds}
\begin{theorem}\label{thm:lower_bound_dist_to_cocluster}
    Let $\Pi$ be the class of coclusters, $d\geq 0$ arbitrary, and let $G$ be a traceable graph with $\operatorname{dist}_{\Pi}(G)\leq d$ and $n\geq 4d+4$ vertices, then $G$ contains at least $\frac{n^2}{16}$ edges.
\end{theorem}
\begin{proof}
    Let $G=(V,E)$ and let $S\subseteq V$ be a modulator to $\Pi$. $G\setminus S$ is a cocluster, in other words, a complete $k$-partite graph. Let $V\setminus S=V_1\cup \cdots \cup V_k$ be the parts of $G\setminus S$. We show that no part can be too big. In particular, for every $i\in[k]$ we have $|V_i|\leq \frac{n}{2}+1$. To see this, assume for contradiction that $|V_i|>\frac{n}{2}+1$ for some $i$ and consider the Hamiltonian path $P$ in $G$. Every vertex $x\in V_i$ has a unique successor on $P$ (except possibly one vertex in $V_i$ which is the endpoint of $P$), so there are at least $\frac{n}{2}$ vertices outside $V_i$, which implies there are at least $|V_i|+\frac{n}{2}=n+1>n$ vertices in $G$---a contradiction.
    Since $G\setminus S$ is complete $k$-partite and each $V_i$ is of size at most $\frac{n}{2}+1$ it follows that every vertex of $V\setminus S$ is incident to at least $n-d-(\frac{n}{2}+1)=\frac{n}{2}-d-1$ other vertices inside $V\setminus S$. We obtain the following bound:
    \[
    |E(G)|\geq |E(G\setminus S)|=\frac{1}{2}\sum_{v\in V\setminus S}\deg_{G\setminus S}(v)\geq \frac{1}{2}(n-d)\cdot \left(\frac{n}{2}-d-1\right)\geq \frac{n^2}{16}.
    \]
    We used the inequalities $n-d\geq \frac{n}{2}$ and $\frac{n}{2}-d-1\geq \frac{n}{4}$ valid for $n\geq 4d+4$.
\end{proof}

\begin{theorem}\label{thm:lower_bound_maximum_induced_matching}
    Let $G$ be a traceable graph on $n$ vertices and $d=\mim(G)$ be the size of a maximum induced matching of $G$. Then $G$ contains at least $\frac{1}{8}\cdot \frac{n^2}{d}$ edges.
\end{theorem}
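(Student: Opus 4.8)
The plan is to exploit the Hamiltonian path to manufacture a large pool of candidate matching edges, and then argue via Turán's theorem that a \emph{small} induced matching forces many edges among these candidates. First I would fix a Hamiltonian path $v_1 v_2 \cdots v_n$ and group its vertices into $m = \lfloor n/2 \rfloor$ consecutive blocks $B_j = \{v_{2j-1}, v_{2j}\}$. Each block carries its internal path edge $e_j = \{v_{2j-1}, v_{2j}\}$, and because the blocks are disjoint, these $m$ internal edges are pairwise vertex-disjoint, i.e.\ they already form a (not necessarily induced) matching of $G$.

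Next I would form the auxiliary graph $H$ on vertex set $\{B_1, \ldots, B_m\}$ in which two blocks are adjacent exactly when $G$ contains at least one edge between them. The crucial bridging observation is that any independent set $\{B_{j_1}, \ldots, B_{j_k}\}$ of $H$ yields an \emph{induced} matching $\{e_{j_1}, \ldots, e_{j_k}\}$ of $G$: the edges are disjoint because the blocks are, and there are no additional edges among their endpoints precisely because the blocks are pairwise non-adjacent in $H$. Consequently every independent set of $H$ has size at most $\mim(G)$, so $\alpha(H) \le p$.

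To convert this into an edge count I would pass to the complement $\overline{H}$, which is $K_{p+1}$-free since a clique in $\overline{H}$ is an independent set in $H$. Turán's theorem then gives $|E(\overline{H})| \le (1 - \tfrac1p)\,\tfrac{m^2}{2}$, whence $|E(H)| \ge \binom{m}{2} - (1 - \tfrac1p)\,\tfrac{m^2}{2} = \frac{m^2}{2p} - \frac{m}{2}$. Each edge of $H$ corresponds to a distinct edge of $G$ (pick one representative $G$-edge per adjacent block pair), and the $m$ internal edges $e_j$ are distinct from all of these, so $|E(G)| \ge \frac{m^2}{2p} - \frac{m}{2} + m = \frac{m^2}{2p} + \frac{m}{2}$. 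Substituting $m = \lfloor n/2 \rfloor \approx n/2$ produces the claimed $\frac18 \cdot \frac{n^2}{p}$, with the $+\frac{m}{2}$ term supplying exactly the slack needed to absorb the rounding.

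I expect the main obstacle to be pinning down the equivalence between independence in $H$ and inducedness of the associated matching, as this is the step that transfers the hypothesis $\mim(G) = p$ into a clean combinatorial constraint $\alpha(H) \le p$ on the auxiliary graph; once that is secured, everything reduces to Turán's theorem and arithmetic. The only other point requiring care is the parity bookkeeping when $n$ is odd, where one leftover vertex forces $m = (n-1)/2$; there I would rely on the surplus from the internal edges together with the integrality of $|E(G)|$ to recover the exact constant $\frac18$.
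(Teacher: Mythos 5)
Your argument is sound and follows the same basic template as the paper's proof -- build an auxiliary graph from the Hamiltonian path, observe that a bound on $\mim(G)$ makes it $K_{p+1}$-free after complementation, invoke Tur\'an, and transfer the resulting edge count back to $G$ -- but the auxiliary construction differs in a way worth noting. The paper takes all $n-1$ path edges as vertices of $H$ and joins two of them when they form an induced matching; the price is that the map from non-edges of $H$ back to edges of $E(G)\setminus E(P)$ is only $4$-to-$1$, which is where the paper loses a factor of $4$. You instead use $m=\lfloor n/2\rfloor$ \emph{disjoint} blocks of two consecutive path vertices, which makes the correspondence between block pairs and witnessing $G$-edges injective; you lose the same factor of $4$ up front because $m\approx n/2$ gets squared. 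The two routes land on the same constant, but yours has the advantage that the bookkeeping (one representative edge per adjacent block pair, plus the $m$ internal edges, all distinct) is cleaner than the paper's multiplicity argument.

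One detail does not work as you describe it. For $p\ge 2$, or for $p=1$ with $n$ even, your bound $\frac{m^2}{2p}+\frac{m}{2}$ does dominate $\frac{n^2}{8p}$. But for $p=1$ and $n$ odd you get $\frac{m(m+1)}{2}=\frac{n^2-1}{8}$ with $m=\frac{n-1}{2}$, and integrality of $|E(G)|$ does \emph{not} rescue you: for odd $n$ one has $n^2\equiv 1\pmod 8$, so $\frac{n^2-1}{8}$ is already an integer and the conclusion $|E(G)|\ge\frac{n^2-1}{8}$ is strictly weaker than $|E(G)|\ge\frac{n^2}{8}$. The fix is immediate, though: when $n$ is odd the unpaired vertex $v_n$ lies in no block, so the path edge $\{v_{n-1},v_n\}$ is counted neither among the internal edges nor among the cross-block representatives, and adding it gives $|E(G)|\ge\frac{n^2-1}{8}+1>\frac{n^2}{8}$. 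With that one-line patch the proof is complete.
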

\begin{proof}
    Let $P$ be a Hamiltonian path of $G$. 
    Consider an auxiliary graph $H$ with $V(H)=E(P)$ and $\{e,f\}\in E(H)$ if and only if $e,f$ induce an induced matching in $G$, i.e., $e \cap f = \emptyset$ and $E(G[e \cup f]) = \{e,f\}$. 
    Notice that a $K_{d+1}$ in $H$ would correspond to an induced matching of size $d+1$ in $G$.
    Consider the complement $\overline{H}$ and notice that each edge of $\overline{H}$ corresponds to either two consecutive edges on~$P$ (these do not form an induced matching) or at least one edge in $E(G)\setminus E(P)$. 
    Picking any of the edges if there are more of them provides a mapping from $E(\overline{H})\setminus \{\{e,f\}\mid\text{$e$, $f$ are consecutive on $P$}\}$ to $E(G)\setminus E(P)$ which maps at most $4$ edges of $E(\overline{H})$ to one edge of $E(G)\setminus E(P)$.

    We lower bound $|E(\overline{H})|$. 
    As $H$ is $K_{d+1}$-free, by Turán's theorem, we have $|E(H)|\le (1-\frac{1}{d})\frac{n^2}{2}$.
    Therefore, $|E(\overline{H})|\geq \binom{n}{2}-(1-\frac{1}{d})\frac{n^2}{2}\geq \frac{n^2}{2d} - \frac{n}2$. 
    By the above argumentation, 
    we have that $\frac{n^2}{2d} - \frac{n}2 -(n-2) \le |E(\overline{H})|-(n-2) \leq 4(|E(G)|-(n-1))$, which entails $|E(G)|\geq \frac14 \cdot \left(\frac{n^2}{2d} - \frac{n}2 -(n-2)\right)+n-1 \ge \frac{n^2}{8d}$ which we wanted to show.
\end{proof}

\begin{theorem}\label{thm:lower_bound_nd}
    Let $G$ be a traceable graph with neighborhood diversity $\nd(G)$ and $n\geq 2$ vertices. Then $G$ contains at least $\frac{1}{4}\cdot \frac{n^2}{\nd(G)}$ edges.
\end{theorem}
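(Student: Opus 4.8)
The plan is to lower-bound the number of edges by a degree-sum argument, using neighborhood diversity to control the individual degrees of the vertices along a Hamiltonian path. Write $d=\nd(G)$ and fix a partition $V_1,\dots,V_d$ witnessing it, with $n_i=|V_i|$, so that $\sum_{i=1}^d n_i=n$. Fix a Hamiltonian path $P=(u_1,u_2,\dots,u_n)$, and for each vertex let $c_k\in[d]$ denote the index of the class containing $u_k$. The key structural observation I would establish first is that for every $k\in\{1,\dots,n-1\}$ we have $\deg(u_k)\ge n_{c_{k+1}}-1$. Indeed, $u_ku_{k+1}$ is an edge of $G$; if $c_k\neq c_{k+1}$ then by the definition of neighborhood diversity the classes $V_{c_k}$ and $V_{c_{k+1}}$ are completely joined, so $u_k$ is adjacent to all $n_{c_{k+1}}$ vertices of $V_{c_{k+1}}$, whereas if $c_k=c_{k+1}$ then this class must be a clique and $u_k$ is adjacent to its other $n_{c_k}-1=n_{c_{k+1}}-1$ vertices.

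Summing this inequality over all steps of $P$ (and simply discarding $\deg(u_n)\ge 0$) would give
\[
2|E(G)|=\sum_{k=1}^n \deg(u_k)\ \ge\ \sum_{k=1}^{n-1}\bigl(n_{c_{k+1}}-1\bigr)=\left(\sum_{k=1}^n n_{c_k}\right)-n_{c_1}-(n-1).
\]
The point of the rewriting is that each class $V_i$ is visited exactly $n_i$ times by $P$, so $\sum_{k=1}^n n_{c_k}=\sum_{i=1}^d n_i^2$. Applying Jensen's inequality to the convex function $x\mapsto x^2$ (precisely the lemma stated above) yields $\sum_{i=1}^d n_i^2\ge d\cdot (n/d)^2=n^2/d$. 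Since trivially $n_{c_1}\le n$, combining these estimates gives $2|E(G)|\ge n^2/d-2n+1$, that is, $|E(G)|\ge \frac{n^2}{2d}-n+\tfrac12$.

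This already yields the claimed bound with room to spare once $n$ is large relative to $d$: for $n\ge 4d$ one checks directly that $\frac{n^2}{2d}-n+\tfrac12\ge \frac{n^2}{4d}$, since $\frac{n^2}{4d}-n=\frac{n(n-4d)}{4d}\ge 0$. In the remaining sparse regime $n<4d$ the target $\frac14\cdot\frac{n^2}{d}$ is itself small, and I would fall back on the elementary fact that a traceable graph has at least $n-1$ edges, combining $|E(G)|\ge n-1$ with $|E(G)|\ge \frac{n^2}{2d}-n+\tfrac12$ and a short case check around the crossover $n\approx 4d$. I expect this last step to be the main obstacle: the structural heart of the proof, the per-vertex degree bound, is immediate, but the degree-sum estimate is naturally off by a constant factor from the crude linear term $-n$ (and from having dropped $n_{c_1}$ and $\deg(u_n)$). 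Extracting the clean factor $\tfrac14$ \emph{uniformly} for all $n\ge 2$, rather than merely the asymptotic factor $\tfrac12$, therefore requires careful book-keeping of the lower-order terms and the boundary cases, which is where all the real difficulty lies.
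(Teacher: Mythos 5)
Your argument takes a genuinely different route from the paper's. The paper counts, for each class $V_i$, the edges incident to $V_i$ (internal edges if $V_i$ is a clique; otherwise at least $n_i-1$ vertices outside $V_i$, each joined to all of $V_i$), sums $\frac{n_i(n_i-1)}{2}$ over classes, applies Jensen, and dispatches the regime $n\le 2\nd(G)$ with the trivial bound $n-1$. Your degree-sum along the Hamiltonian path, driven by the per-vertex bound $\deg(u_k)\ge n_{c_{k+1}}-1$, is correct and arrives at essentially the same quadratic form $\sum_i n_i^2\ge n^2/d$; the structural core of your proof is sound and arguably slicker.

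However, the endgame you sketch does not close, and the obstacle you flag is a real hole, not just book-keeping. At $n=4d-1$ the target is $\frac{n^2}{4d}=4d-2+\frac{1}{4d}$, the trivial bound gives only $n-1=4d-2$, and your estimate $\frac{n^2}{2d}-n+\frac12$ evaluates to $4d-\frac52+\frac{1}{2d}$, which is also strictly below the target for every $d\ge 1$. So the maximum of the two bounds you propose to combine falls short at this value of $n$, and no case check around the crossover can rescue that particular combination. The fix is to stop discarding $\deg(u_n)$: since $u_n$ is adjacent to $u_{n-1}$, you have $\deg(u_n)\ge 1$, which upgrades the estimate to $|E(G)|\ge\frac{n^2}{2d}-n+1$. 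With that, $n-1\ge\frac{n^2}{4d}$ holds for all $2\le n\le 4d-2$ (because then $4d(n-1)\ge(n+2)(n-1)=n^2+n-2\ge n^2$), while $\frac{n^2}{2d}-n+1\ge\frac{n^2}{4d}$ holds for all $n\ge 4d-1$ (it is equivalent to $n^2-4dn+4d\ge 0$, which equals $1$ at $n=4d-1$ and is increasing in $n$ from there on). So your approach does go through, but only after this extra half-edge of care that your write-up leaves unresolved.
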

\begin{proof}
First, note that $G$ has at least $n-1$ edges, and if $n \le 2\nd(G)$, then 
\[n-1 \ge \frac{n}2 \ge \frac{n\cdot 2\nd(G)}{4\nd(G)} \ge \frac{n^2}{4\nd(G)}\]
so the claim holds.
Therefore we assume that $G$ has at least $2\nd(G)$ vertices.
Let $G$ be a traceable graph with a neighborhood diversity partition of $V(G)$ into $d=\nd(G)$ sets $V_1,\ldots,V_d$ and let $n_i$ denote the size of~$V_i$ for each $i \in [d]$.
We bound the number of edges incident to each $V_i$ separately depending on whether $V_i$ induces a clique or an independent set.
If $V_i$ induces a clique then there are exactly $\binom{n_i}{2}$ edges with both endpoints in~$V_i$.
Otherwise, $V_i$ induces an independent set and no two of its vertices can appear next to each other in the Hamiltonian path.
Therefore, there are at least $n_i-1$ vertices outside of $V_i$ with a neighbor in~$V_i$.
Due to the properties of neighborhood diversity, all these vertices are in fact adjacent to the whole set~$V_i$ and it follows that $V_i$ is incident to at least $n_i (n_i-1)$ edges.
Notice that in this second case we may consider each edge twice, once from each endpoint.
Accounting for this overcounting, we obtain
\[
  |E(G)| 
  \ge \sum_{\text{$V_i$ clique}} \binom{n_i}{2} + \sum_{\text{$V_i$ independent}} \frac{n_i (n_i - 1)}{2}  
  =\sum_{i=1}^d \frac{n_i (n_i - 1)}{2}
  \geq d\cdot \frac{\frac{n}{d} (\frac{n}{d}-1)}{2}  
  = \frac{n(n-d)}{2d}
  \geq \frac{n^2}{4d}.
\]  
The second inequality follows by applying Jensen's inequality to the convex function $x \mapsto \frac{x (x-1)}{2}$. The last inequality follows from the fact that $n-d\geq \frac{n}{2}$ because $n\geq 2d$.
\end{proof}

\begin{theorem}\label{thm:upper_bound_eccn}
    For each $d\geq 1$ and infinitely many $n$ there is a traceable graph on $n$ vertices with edge clique cover number at most $d$ and at most $\frac{2n^2}{d}$ edges.
\end{theorem}
\begin{proof}
    For any $k\geq 2$ we construct a graph $G$ on $n=kd-d+1$ vertices as follows. Construct $d$ cliques $K^1,K^2,\ldots,K^d$ of size $k$ and identify one vertex of $K^1$ with a vertex of $K^2$, for every $i\geq 2$ identify one vertex of $K^i$ (an arbitrary one not identified with a vertex of $K^{i-1}$) and identify it with a vertex of $K^{i+1}$. The graph clearly has $n$ vertices and the cliques form an edge clique cover for $G$. The number of edges of $G$ is $d\binom{k}{2}\leq d\frac{k^2}{2}\leq \frac{2n^2}{d}$,
    as claimed. We used the fact that $kd=n+d-1\leq 2n$, for $n$ large enough  so $k\leq \frac{2n}{d}$.
\end{proof}

\begin{corollary}\label{cor:upper_bound_maximum_induced_matching}
    For any $d\geq 1$ and infinitely many $n$ there is a traceable graph $G$ with at most $\frac{2n^2}{d}$ edges and maximum induced matching of size $d$.
\end{corollary}
\begin{proof}
    We use \Cref{thm:upper_bound_eccn}. The constructed graph has $\mim \leq d$. This follows from the fact that $\mim \leq \vccn \leq \eccn$. To see this, suppose that $\mim > \vccn$ and consider the vertex clique cover of $G$. Since $\mim > \vccn$, by the Pigeonhole principle, there is a clique containing two edges of the induced matching, contradiction. $\vccn \leq \eccn$ follows from the fact that $G$ contains no isolated vertices.
\end{proof}

\begin{corollary}\label{cor:upper_bound_nd}
    For each $d\geq 1$ and infinitely many $n$ there is a traceable graph on $n$ vertices with neighborhood diversity at most $2d-1$ and at most $\frac{2n^2}{d}$ edges.
\end{corollary}
\begin{proof}
    Use the proof of \Cref{thm:upper_bound_eccn} and notice that it has neighborhood diversity at most $2d-1$. The neighborhood diversity partition is made of the $d$ cliques $K^1,K^2,\ldots,K^d$ with vertices in the intersection $V(K^i)\cap V(K^{i+1})$ removed and the $d-1$ vertices that are in the intersection $V(K^i)\cap V(K^{i+1})$ for $i\in[d-1]$. This is a neighborhood diversity partition of $G$ of size $2d-1$.
\end{proof}

\subsection{$n\log n$ bounds}\label{subsec:nlogn_bounds}

\begin{theorem}\label{thm:lower_bound_cograph}
    Let $G$ be a traceable cograph. Then $G$ contains at least $\frac{1}{4}\cdot n\log_2n$ edges.
\end{theorem}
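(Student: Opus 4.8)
The plan is to prove a stronger statement, because the recursive structure of a cograph does not interact well with traceability alone: a connected cograph is a join $A \otimes B$, yet a join of two \emph{non}-traceable cographs can easily be traceable (e.g.\ $\overline{K_2}\otimes\overline{K_2}=C_4$), so an induction that only assumes traceability of the parts would have nothing to recurse on. The right invariant is the \emph{path-cover number} $\pi(H)$, the minimum number of vertex-disjoint paths needed to cover $V(H)$; a graph is traceable exactly when $\pi(H)=1$. I would prove the following by induction on $n$: every cograph $H$ on $n\ge 1$ vertices with $\pi(H)\le t$ satisfies $|E(H)|\ge \psi(n,t)$, where $\psi(n,t):=\tfrac14 n\log_2\tfrac nt$ (a real number, possibly negative, in which case the bound is vacuous since $|E(H)|\ge 0$). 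Taking $t=1$ yields the theorem.

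A cograph is a single vertex, a disjoint union of two cographs, or a join of two cographs, so I would induct along this decomposition; the base case $n=1$ is immediate. The key structural fact for the join case is the inequality $\pi(A)\le |B|+\pi(A\otimes B)$. To see it, take a cover of $A\otimes B$ by $k=\pi(A\otimes B)$ paths and restrict each path to its $A$-vertices: a path carrying $\beta$ vertices of $B$ breaks into at most $\beta+1$ maximal runs of $A$-vertices, and each run is a genuine path in $A$ (consecutive vertices on a path are adjacent, and an edge with both endpoints in $A$ is an edge of $A$); summing over the $k$ paths gives a cover of $A$ by at most $|B|+k$ paths. Hence for $H=A\otimes B$ with $\pi(H)\le t$ and $|A|=a,\ |B|=b$ we get $\pi(A)\le b+t$ and, symmetrically, $\pi(B)\le a+t$, so the induction hypothesis yields $|E(H)|\ge \psi(a,t+b)+\psi(b,t+a)+ab$, the last term counting the cross edges of the join.

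For the \emph{union} case $H=H_1\oplus H_2$ with $\pi(H_i)=t_i$ and $t_1+t_2=\pi(H)\le t$, the chain $\psi(n_1,t_1)+\psi(n_2,t_2)\ge\psi(n_1+n_2,t_1+t_2)\ge\psi(n,t)$ is exactly the log-sum inequality (convexity of $x\log x$) followed by monotonicity of $\psi$ in $t$, so this case is essentially free. The \emph{join} case is where the work lies. Assuming $a\le b$, the term $\psi(a,t+b)$ is nonpositive (since $t+b>a$) and may be replaced by $0$, reducing the goal to $\psi(b,t+a)+ab\ge\psi(n,t)$. I would then split on whether $b\le t+a$: if so, $\psi(b,t+a)\le 0$ and the balance forced by $b-a\le t$ makes the cross-edge term alone suffice, via $ab\ge\tfrac{n^2-t^2}{4}\ge\tfrac14 n\log_2\tfrac nt$ using the elementary bound $\log_2\tfrac nt\le n-t$; otherwise $b>t+a$, and after expanding and applying $\log_2(1+x)\le x/\ln 2$ together with $\log_2(1+a/t)\le a$, the inequality reduces to $3b\ge \log_2 n+c$ for a small constant $c$, which holds comfortably since $b\ge n/2$. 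I expect the main obstacle to be guessing the correct strengthening $\psi(n,t)=\tfrac14 n\log_2\tfrac nt$ together with the lemma $\pi(A)\le|B|+\pi(A\otimes B)$: once these are in place, the union case is a named inequality and the join case is a routine, if slightly fiddly, case analysis.
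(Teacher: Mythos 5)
Your proof is correct, but it takes a genuinely different route from the paper's. The paper does not strengthen the statement at all: it notes that a traceable cograph on at least two vertices is connected, hence a join $G_1\otimes G_2$ with $|V(G_1)|=k\le n/2$, counts the $k(n-k)\ge kn/2$ cross edges, and cuts the Hamiltonian path at the crossing edges into $\ell\le 2k+1$ segments; each segment induces a \emph{traceable} cograph, so the plain induction hypothesis applies to each piece, and one application of Jensen's inequality to $x\mapsto x\log x$ together with the elementary bound $\frac k2\ge\frac14\log_2(2k+1)$ finishes the argument. You instead recurse along the cotree with the strengthened invariant $|E(H)|\ge\frac14 n\log_2\frac{n}{\pi(H)}$, which is exactly the right fix for the obstacle you identify (the two sides of a join need not be traceable); your lemma $\pi(A)\le|B|+\pi(A\otimes B)$ is the same combinatorial fact the paper uses implicitly when bounding the number of segments by $2k+1$. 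The paper's version buys brevity: connectivity disposes of the disjoint-union case entirely, and the join step needs no case split. Your version buys generality: the path-cover formulation handles disconnected cographs uniformly via the log-sum inequality, and the intermediate statement (an edge bound for cographs in terms of path-cover number) is independently reusable, for instance in the spirit of the paper's later distance-to-cograph bounds. The fiddly estimates in your join case do check out: $\log_2(n/t)\le n-t$ holds for integers $n\ge t\ge1$, $\log_2(1+a/t)\le a$ holds for $a,t\ge1$, and the final reduction to $3b\ge\log_2 n+1/\ln 2$ is comfortably satisfied since $b\ge n/2$ and $n\ge 2$.
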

\begin{proof}
    We proceed by induction on $n$. The claim is obvious for $n\leq 4$. Let $G$ be traceable cograph on $n\geq 5$ vertices. Since $G$ contains a Hamiltonian path it is necessarily a join of two smaller cographs $G_1$ and $G_2$. Denote $|V(G_1)|=k$ and $|V(G_2)|=n-k$ and assume without loss of generality that $k\leq \frac{n}{2}$. 
    As $n-k \ge \frac{n}2$, we have at least $k(n-k) \ge \frac{nk}2$ edges between the parts.    
    We split the Hamiltonian path into $\ell$ subpaths $P_1,\ldots,P_\ell$ by removing edges with one endpoint in $G_1$ and other endpoint in $G_2$. Clearly $2\leq \ell \leq 2k+1$.
    The total number of edges is $|E(G)|\geq \frac{kn}{2}+\sum_{i=1}^\ell|E(P_i)|$. By the induction hypothesis, since the paths $P_i$ induce a Hamiltonian path in the subgraph induced by $V(P_i)$, each segment induces at least $\frac{1}{4}|P_i|\log |P_i|$ edges. Hence
    \begin{align*}
        |E(G)|&\geq\frac{kn}{2}+\frac{1}{4}\sum_{i=1}^\ell |P_i|\log |P_i| \geq \\ &\geq \frac{kn}{2}+\frac{1}{4}\ell \frac{n}{\ell}\log\frac{n}{\ell}\geq\\&\geq \frac{kn}{2}+\frac{1}{4}n\log \left(\frac{n}{2k+1}\right) =\\&=\frac12nk+\frac14n\log n-\frac14 n \log (2k+1)
    \end{align*}
    where the last two inequalities are due to Jensen's inequality and the fact that $\ell \leq 2k+1$. Since $\frac{k}{2}-\frac{1}{4}\log (2k+1)\geq 0$ for any $k\geq 1$, the last term is lower bounded by $\frac{1}{4}n\log n$ as desired.
\end{proof}

\begin{theorem}\label{thm:upper_bound_cograph}
    For infinitely many $n$ there is a traceable cograph on $n$ vertices with at most $n\log_2 n$ edges.
\end{theorem}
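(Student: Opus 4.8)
The plan is to give an explicit recursive construction. Define a family of cographs by $G_0 = K_1$ and $G_t = K_1 \otimes (G_{t-1} \oplus G_{t-1})$ for $t \ge 1$, where $K_1$ is a single \emph{apex} vertex, $\oplus$ is the disjoint union and $\otimes$ is the complete join. Since each $G_t$ is assembled using only the operations \oitemref{itm:modular_width_O1}, \oitemref{itm:modular_width_O2}, and \oitemref{itm:modular_width_O3}, every $G_t$ is a cograph. Writing $n_t = |V(G_t)|$, the size recurrence $n_t = 2n_{t-1} + 1$ with $n_0 = 1$ gives $n_t = 2^{t+1} - 1$, so the construction realizes infinitely many values of $n$, as required.

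I would first establish traceability by induction on $t$. The base case $G_0 = K_1$ is trivially traceable. For the inductive step, let $P$ and $P'$ be Hamiltonian paths of the two copies of $G_{t-1}$, guaranteed by the hypothesis, and let $v$ be the apex vertex. Because $v$ is joined to all vertices of both copies, the concatenation of $P$, then $v$, then $P'$ is a Hamiltonian path of $G_t$: the join supplies the edge from the last vertex of $P$ to $v$ and the edge from $v$ to the first vertex of $P'$, while the remaining edges come from $P$ and $P'$.

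Next I would count edges. The only edges of $G_t$ not contained in one of the two copies of $G_{t-1}$ are the $n_t - 1$ join edges incident to $v$. Hence, with $f(t) = |E(G_t)|$, we obtain $f(t) = 2f(t-1) + (n_t - 1) = 2f(t-1) + 2^{t+1} - 2$ and $f(0) = 0$. Dividing by $2^t$ and telescoping (or a one-line induction) solves this to $f(t) = (t-1)\,2^{t+1} + 2$, which, substituting $2^{t+1} = n+1$ and $t+1 = \log_2(n+1)$ with $n = n_t$, equals $(n+1)\log_2(n+1) - 2n$.

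Finally I would check that this is at most $n\log_2 n$. The point is that $(n+1)\log_2(n+1) - n\log_2 n = n\log_2\!\bigl(1+\tfrac1n\bigr) + \log_2(n+1)$ grows only like $\log_2 n$, so it is dwarfed by the subtracted $2n$ term, giving $(n+1)\log_2(n+1) - 2n \le n\log_2 n$ for every $t \ge 1$. I expect the main care to lie exactly in this last comparison: the construction pays a ``$+1$ apex vertex'' at every recursive level, which turns the natural $\log_2 n$ into $\log_2(n+1)$, and one must confirm that this shift is absorbed by the slack from the $-2n$ term rather than pushing the count over the target $n\log_2 n$. The cograph-membership and traceability steps are routine; the edge bookkeeping and the final inequality are where the verification must be done carefully.
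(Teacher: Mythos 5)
Your construction is exactly the paper's: the paper also builds its example recursively by joining a single new vertex to the disjoint union of two copies of the previous graph, obtaining $n=2^k-1$ vertices, and argues traceability and the edge count by the same induction. The only difference is bookkeeping — the paper carries the looser inductive bound of $2^k k$ edges, whereas you solve the recurrence exactly and verify $(n+1)\log_2(n+1)-2n\le n\log_2 n$ directly; your version is correct (and in fact slightly sharper).
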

\begin{proof}
    For every $k$ we construct a traceable cograph with $n=2^k-1$ vertices and at most $2^kk\leq n\log_2 n$ edges. 
    We proceed by induction on~$k$. For $k=1$ we take $P_1$ and for $k=2$ the path $P_3$.

    For the inductive step, take a disjoint union of two traceable cographs with $2^k-1$ vertices and at most $2^kk$ edges each. 
    Add a new vertex and perform a join with the union of the two graphs on one side and the new vertex on the other side. The new graph has $2\cdot (2^k-1)+1=2^{k+1}-1$ vertices. Clearly it is traceable and it has at most $2\cdot 2^kk+2(2^k-1)\leq 2^{k+1}(k+1)$ edges.
\end{proof}

\begin{theorem}\label{thm:lower_bound_modular_width}
    Let $G$ be a traceable graph and let $d=\mw(G)$ be the modular-width of $G$. Then $G$ has at least $\frac{1}{4}\cdot n\log_d n$ edges.
\end{theorem}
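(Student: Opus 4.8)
The plan is to mimic the proof of \Cref{thm:lower_bound_cograph} by induction on $n$, replacing the binary join by the top level of a modular decomposition. I assume $d\ge 2$ (the cograph case $d=0$ is \Cref{thm:lower_bound_cograph}) and dispose of small $n$ directly. Since $G$ is traceable it is connected, so a width-$\le d$ expression for $G$ cannot have a disjoint union at the top; hence $G=H(G_1,\dots,G_t)$ for a \emph{connected} pattern graph $H$ on $2\le t\le d$ vertices (the complete join being the special case $t=2$, exactly as in the cograph proof), and each module $G_i$ again has modular-width at most $d$. Write $n_i=|V(G_i)|$ and let $B=\sum_{\{i,j\}\in E(H)}n_in_j$ be the number of edges running between distinct modules.

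First I would delete from the Hamiltonian path $P$ all of its between-module edges, splitting $P$ into segments $P_1,\dots,P_\ell$; each segment lies inside a single module, and two consecutive segments lie in modules adjacent in $H$. Since modular-width is closed under induced subgraphs and each $G[V(P_j)]$ is traceable via $P_j$, the induction hypothesis gives $|E(G[V(P_j)])|\ge \tfrac14|P_j|\log_d|P_j|$; these edge sets are pairwise disjoint and disjoint from the $B$ between-module edges. Thus
\[
 |E(G)|\ \ge\ B+\tfrac14\sum_{j=1}^{\ell}|P_j|\log_d|P_j|,
\]
and the theorem reduces to the between-edge bound $B\ge \tfrac14\sum_{j}|P_j|\log_d\frac{n}{|P_j|}$. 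Relaxing the right-hand side by Jensen's inequality to $\tfrac14 n\log_d\ell$ is enough when the segments are balanced, but the finer form is needed when one module contains almost all the vertices, because then the logarithmic factor is produced \emph{inside} that module by the induction rather than by $B$.

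Establishing this between-edge bound is the step I expect to be the main obstacle. The structural inputs I would prepare are: (a) the identity $B=\tfrac12\sum_i n_i\delta_i$, where $\delta_i=\sum_{j\in N_H(i)}n_j$ is the common external degree of the vertices of $G_i$; (b) the number $d_i$ of segments lying in $G_i$ satisfies $d_i\le\delta_i+1$, since each such segment (bar the last) is immediately followed by a vertex of a neighbouring module and these follower-vertices are distinct — this is precisely where traceability forbids a large module from being attached through too few outside vertices; and (c) consecutive segments of the largest module $G_1$ are separated by segments outside it, yielding $\ell\le 2(n-n_1)+1$.

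Finally I would combine these, the intuition being that the logarithm is supplied either by $B$ or by the induction inside the modules. When some module $G_1$ carries most of the vertices and is attached narrowly, (b) forces $d_1$ to be small, so the few segments inside $G_1$ are large and Jensen applied to them alone already yields essentially $\tfrac14 n\log_d n$, the remaining $n-n_1$ vertices contributing only a lower-order term controlled by (c); if instead $G_1$ is attached widely then $\delta_1$ is large and $B\ge\tfrac12 n_1\delta_1$ is itself large. In the complementary regime the segments are short and numerous, so $\ell$ is large, forcing many between-module transitions and, through the mechanism behind (b), a correspondingly large $B$; here the elementary inequality $d\log_d n\le n$ (valid beyond the base cases $n\le 4$) lets $B$ absorb $\tfrac14\sum_j|P_j|\log_d\frac{n}{|P_j|}$. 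The delicate point — and the part I expect to require the most care — is the interface between these regimes, and in particular the $O(d)$ small low-degree modules whose individual contributions to $\tfrac14\sum_j|P_j|\log_d\frac{n}{|P_j|}$ can exceed their share of $B$; I would control these by a global amortization that exploits both $d_i\le\delta_i+1$ and the bound of $d$ on the total number of modules, rather than charging module by module.
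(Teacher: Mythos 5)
Your structural setup matches the paper's proof almost exactly: take the top-level modular partition, split the Hamiltonian path into within-module segments, apply the induction hypothesis to each segment, and use the key observation that each vertex immediately following a segment of a module $M_i$ lies in a neighbouring module and is therefore adjacent to \emph{all} of $M_i$ (your facts (a) and (b) are the paper's count of at least $n_i\ell_i^-$ edges leaving $M_i$, where $\ell_i$ is your $d_i$ and $\ell_i^-=\max(1,\ell_i-1)$). The gap is that the quantitative heart of the argument --- your between-edge bound $B\ge\tfrac14\sum_j|P_j|\log_d\tfrac{n}{|P_j|}$ --- is exactly the part you leave unproven, replaced by a sketch of a two-regime case analysis plus an unspecified ``global amortization'' over the small low-degree modules. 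That target inequality does hold, but as written you have not established the theorem, and the route you sketch (charging each module against its share of $B$, then patching the up to $d$ exceptional modules whose share is too small) is precisely the delicate bookkeeping that a better arrangement of the estimate avoids entirely.

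Here is how to close it cleanly. Group the segments by module and apply Jensen within each module, so the inductive contribution of $M_i$ is at least $\tfrac14 n_i\log_d\tfrac{n_i}{\ell_i}$, while the between-module edges contribute at least $\tfrac12\sum_i n_i\ell_i^-$ after halving for double counting. The single elementary per-module inequality $2\ell_i^-\ge\log_d(d\ell_i)$ (valid for every $\ell_i\ge 1$ and $d\ge 2$) then converts the second sum into exactly what is needed to cancel the $\ell_i$ in the denominator \emph{and} gain an extra factor $d$ inside the logarithm, yielding $|E(G)|\ge\tfrac14\sum_i n_i\log_d(d\,n_i)$. A final application of Jensen over the $p\le d$ modules gives $\tfrac14 n\log_d\tfrac{dn}{p}\ge\tfrac14 n\log_d n$. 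The factor $d$ inside the logarithm is precisely the slack that pays for splitting $n$ among up to $d$ modules; in particular the small, narrowly attached modules you worry about are handled for free, and no case distinction between wide and narrow attachment, nor any global amortization, is needed.
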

\begin{proof}
    We proceed by induction on $n$. The claim is obvious for $n\leq 4$. Let $G$ be a graph with modular-width~$d$ on $n\geq 5$ vertices and let $P$ be its arbitrary Hamiltonian path.
    Let $M_1, \dots, M_p$ be a modular partition of~$G$ where $p \le d$.
    For each $i \in [p]$, we let $n_i$ denote the size of $M_i$ and we let $\ell_i$ be the number of disjoint paths $P^i_1, \dots, P^i_{\ell_i}$ obtained by restricting~$P$ to the module~$M_i$.
    
    For each $i \in [p]$, we separately bound the edges with both endpoints inside~$M_i$ and the edges with exactly one endpoint in~$M_i$. 
    By the induction hypothesis, the graph induced by the segment~$P^i_j$ has at least $\frac14|P^i_j|\log_d|P^i_j|$ edges since it is itself a traceable graph with modular-width at most~$d$.
    Therefore, the number of edges with both endpoints in~$M_i$ is at least 
    \[\sum_{j=1}^{\ell_i} \frac14|P^i_j|\log_d|P^i_j| \ge \frac14 n_i\log_d \frac{n_i}{\ell_i}, \]
    where the inequality follows by using Jensen's inequality on the convex function $x \mapsto x \log_d x$.
    
    On the other hand, there are at least $\ell_i-1$ vertices outside of~$M_i$ with neighbors in~$M_i$ since~$P$ must contain at least one vertex outside of~$M_i$ between the segments $P^i_j$ and $P^i_{j+1}$ for each $j \in [\ell_i - 1]$.
    Moreover, in the special case when $\ell_i = 1$, there must still exist at least one such vertex outside of~$M_i$.
    By the properties of modules, these vertices are adjacent to the whole module~$M_i$ and thus there are at least $n_i \ell_i^-$ edges with exactly one endpoint in~$M_i$ where $\ell_i^- = \max(1, \ell_i - 1)$.
    
    Summing these together and adjusting for overcounting, we obtain
    \begin{align*}
      |E(G)| &\ge \sum_{i=1}^p \frac14 n_i\log_d \frac{n_i}{\ell_i} + \sum_{i=1}^p \frac{n_i \ell_i^-}{2} =\\&= \sum_{i=1}^p \frac14 n_i\log_d \frac{d \cdot n_i}{d \cdot \ell_i} + \sum_{i=1}^p \frac{n_i \ell_i^-}{2} =\\&=\sum_{i=1}^p \frac14 n_i \log_d(d \cdot n_i) + \sum_{i=1}^p \frac14 n_i \cdot \left(2\ell_i^- - \log_d(d\ell_i)\right)
    \end{align*}
    where the second term on the right hand side is always non-negative since $2(x-1) - \log_d(dx) \ge 0$ for every $x \ge 2$ and $2\ell_i^- - \log_d(d\ell_i) = 1 \ge 0$ in the special case when $\ell_i = 1$.
    
    Finally, it suffices to apply Jensen's inequality one more time to obtain 
    \[|E(G)| \ge  \sum_{i=1}^p \frac14 n_i \log_d(d \cdot n_i) \ge  \frac14 p \cdot \frac{n}{p}\log_d\left(\frac{d n}{p}\right) \ge \frac14 n \log_d n,\]
    where the last inequality follows from $p \le d$.
\end{proof}

\begin{theorem}\label{thm:upper_bound_modular_width}
    For each $d\geq 2$ and infinitely many $n$ there is a traceable graph on $n$ vertices with modular-width at most $2d$ and at most $2n\log_d n$ edges.
\end{theorem}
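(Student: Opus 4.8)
The plan is to generalize the recursive cograph construction of \Cref{thm:upper_bound_cograph}, where two copies were linked by a single joined vertex, to a construction that links $d$ copies through $d-1$ connector vertices strung along a path pattern. Concretely, I would define a sequence $G_0, G_1, G_2, \dots$ with $G_0$ a single vertex, and obtain $G_k$ by applying the substitution operation to the pattern path
\[
P \;=\; M_1 - c_1 - M_2 - c_2 - \cdots - c_{d-1} - M_d
\]
on $2d-1$ vertices: every pattern vertex $M_i$ is replaced by a fresh copy of $G_{k-1}$ and every $c_i$ is replaced by a single vertex. Since $P$ has $2d-1 \le 2d$ vertices and each copy of $G_{k-1}$ has modular-width at most $2d$ by induction, this yields $\mw(G_k) \le 2d$, as needed.

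Traceability follows by induction. In $G_k$ the connector $c_i$ is adjacent to every vertex of the $i$-th and $(i+1)$-th copies, while distinct copies are not joined to one another. Thus, taking a Hamiltonian path $H_i$ inside each copy (which exists by the inductive hypothesis), the concatenation $H_1\, c_1\, H_2\, c_2 \cdots c_{d-1}\, H_d$ is a Hamiltonian path of $G_k$: each transition leaves the last vertex of some $H_i$ to $c_i$ and then enters the first vertex of $H_{i+1}$, and both edges exist because $c_i$ dominates both copies.

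It remains to count vertices and edges. Writing $n_k = |V(G_k)|$ and $E_k = |E(G_k)|$, the construction gives the recurrences $n_k = d\,n_{k-1} + (d-1)$ and $E_k = d\,E_{k-1} + 2(d-1)\,n_{k-1}$; the latter holds because each of the $d-1$ mutually non-adjacent connectors is incident to exactly $2n_{k-1}$ edges. Rewriting the first as $n_k+1 = d(n_{k-1}+1)$ gives $n_k = 2d^k - 1$, and substituting this into the second, dividing by $d^k$, and summing the resulting telescoping expression (whose geometric tail only helps and is negligible) yields $E_k < 4k\,d^k$.

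Finally I would pass back to $n = n_k$. From $d^k = (n+1)/2$ and $k = \log_d\!\big((n+1)/2\big)$ we get $E_k < 2(n+1)\log_d\!\big((n+1)/2\big)$, and a short estimate confirms this is at most $2n\log_d n$ for all sufficiently large $n$, since the saved factor $-\log_d 2$ outweighs the $\log_d n / n$ discrepancy between $n$ and $n+1$. As the statement only requires infinitely many $n$, taking all large $k$ suffices. I expect this last comparison to be the only delicate point: the construction, its traceability, and the modular-width bound are direct adaptations of the cograph case, and the real care lies in confirming that the clean recurrence bound $2(n+1)\log_d\!\big((n+1)/2\big)$ stays below the target and in pinning down the constant.
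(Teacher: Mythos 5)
Your proposal is correct and takes essentially the same approach as the paper: the paper likewise builds $G_k$ by substituting copies of $G_{k-1}$ into alternate positions of a path pattern on $O(d)$ vertices (it uses $2d$ pattern vertices and base case $P_d$, versus your $2d-1$ and a single vertex), and then bounds the edges by $O(kn)$ with $k\le\log_d n$. The only differences are bookkeeping (your $n=2d^k-1$ versus the paper's $n=\sum_{i=1}^k d^i$) and your final comparison $2(n+1)\log_d\frac{n+1}{2}\le 2n\log_d n$, which indeed holds and is only needed for infinitely many $n$.
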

\begin{proof}
   For every $k \ge 1$ we construct a graph $G_k$ with $n=\sum_{i=1}^k d^i$ vertices, modular-width at most $2d$ and at most $2k \cdot \sum_{i=1}^k d^i \le 2n\log_d n$ edges, where the last inequality follows from $\log_d \sum_{i=1}^k d^i \ge \log_d d^k = k$.
    
    We proceed by induction on $k$. 
    For $k=1$ we take $G_1$ to be the path on $d$ vertices. 
    Clearly, it has $d$ vertices, modular width at most $2d$ and $d-1 \le 2\cdot 1 \cdot d$ edges.
    Let $k \ge 2$. 
    We create the graph $G_k$ by taking a path on $2d$ vertices $x_1,x_2,\ldots,x_{2d}$ and replacing the vertices $x_1,x_3,x_5,\ldots,x_{2d-1}$ by copies of the graph~$G_{k-1}$. 
    The graph $G_k$ has $d\cdot |V(G_{k-1})| + d = d \cdot \sum_{i=1}^{k-1} d^i + d = \sum_{i=1}^k d^i$ vertices and  modular-width at most $2d$ as claimed.
    The number of edges in $G_k$ is at most \[d \cdot 2(k-1)\sum_{i=1}^{k-1}d^i + (2d-1) \cdot \sum_{i=1}^{k-1}d^i \le 2(k-1)\sum_{i=2}^{k}d^i + 2\cdot \sum_{i=2}^{k}d^i \le 2k\sum_{i=1}^{k}d^i. \qedhere\] 
\end{proof}

\subsection{$n^{1+\varepsilon}$ bound for shrub-depth}\label{subsec:shrub_depth}

\begin{theorem}\label{thm:lower_bound_sd}
    Let $m\geq 2, d\geq 1$ be given integers. Let $G$ be a traceable graph on $n\geq 2$ vertices admitting a $(d,m)$-tree-model. Then $G$ has at least $\frac{1}{16m}\cdot n^{1+\frac{1}{2^d - 1}}$ edges.
\end{theorem}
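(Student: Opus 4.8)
The plan is to induct on the depth $d$. For the base case $d=1$ every two leaves lie at distance $2$ in $T$, so whether two vertices are adjacent depends only on the pair of their colours; hence $G$ has neighbourhood diversity at most $m$ (the colour classes are the modules) and \Cref{thm:lower_bound_nd} already gives at least $\frac{1}{4m}n^2 \ge \frac{1}{16m}n^{\beta_1}$ edges, where I write $\beta_d := 1+\frac{1}{2^d-1}$ and note $\beta_1 = 2$. For $n$ small the bound is immediate, so these handle the anchors of the induction.

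For the inductive step I would look at the children $c_1,\dots,c_r$ of the root and let $A_1,\dots,A_r$ be the corresponding sets of leaves; the subtree below each $c_i$ is a $(d-1,m)$-tree-model of $G[A_i]$. If $r=1$ the root is redundant and the induction hypothesis for $d-1$ already yields the (stronger) bound, so I assume $r\ge 2$. Restricting the Hamiltonian path $P$ to the parts splits it into maximal runs, and refining these at colour changes I may assume each run is monochromatic; let $R$ be their total number. Two structural facts drive the rest: \emph{(i)} each run spans a traceable graph admitting a $(d-1,m)$-tree-model, and \emph{(ii)} for $u\in A_i$, $v\in A_j$ with $i\ne j$ the distance in $T$ is always $2d$, so whether $uv\in E(G)$ depends only on the colours of $u,v$. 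In particular every inter-part edge is determined by the colours, and each colour class inside a fixed part is a module with respect to every other part.

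Using \emph{(i)} and the induction hypothesis, a run of size $s$ carries at least $\frac{1}{16m}s^{\beta_{d-1}}$ edges inside its part; since $x\mapsto x^{\beta_{d-1}}$ is convex and the runs partition the $n$ vertices into $R$ blocks, Jensen's inequality bounds the number of \emph{intra-part} edges from below by $\frac{1}{16m}\,n^{\beta_{d-1}}R^{-(\beta_{d-1}-1)}$. Using \emph{(ii)}, I would argue that the number of \emph{inter-part} edges is at least $\frac{c}{m}R^2$ for an absolute constant $c$: contracting each run to a weighted vertex coloured by its colour should yield a traceable structure whose adjacency is governed by the colours alone, i.e.\ a neighbourhood-diversity instance with at most $m$ classes, to which \Cref{thm:lower_bound_nd} applies. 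Since the intra- and inter-part edges are disjoint, summing the two lower bounds and using weighted AM--GM over $R$ gives $|E(G)|\ge\frac{1}{16m}n^{\beta_d}$. The exponent is forced by the identity $\frac{1}{\beta_d-1}=\frac{2}{\beta_{d-1}-1}+1$, equivalently $\beta_d=\frac{2\beta_{d-1}}{\beta_{d-1}+1}$, and the two factors $\frac1m$ combine to exactly $\frac1m$; the slack in the constant $16$ (against the $4$ of the base case) is what absorbs the AM--GM losses.

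The main obstacle is the inter-part bound. The top-level parts are \emph{not} genuine modules: an outside vertex sees a part only through the colours, so it need not be adjacent to all of it, and the naive neighbourhood-diversity reduction is spoiled by pairs of runs lying in the same part, which contribute no inter-part edge yet look adjacent once contracted. Controlling this requires exploiting that each part splits into at most $m$ colour cells, every one of which \emph{is} a module towards all other parts, together with the observation that between two runs of the same part there is always a run of a different part, so no single part can hold more than half of the runs. Pinning down the correct power of $m$ here (i.e.\ $\frac1m$ rather than $\frac1{m^2}$, which the tightness of the matching construction shows is necessary) is the delicate point, and I expect the bulk of the work to lie in making this contraction-to-neighbourhood-diversity argument for the inter-part edges precise.
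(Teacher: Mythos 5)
Your skeleton matches the paper's: induction on $d$, base case via \Cref{thm:lower_bound_nd}, splitting the Hamiltonian path at the root level into segments, bounding intra-part edges by the induction hypothesis plus Jensen, bounding inter-part edges by roughly $\ell^2/m$, and balancing the two via the recurrence $\beta_d = \tfrac{2\beta_{d-1}}{\beta_{d-1}+1}$. The balancing computation and the exponent bookkeeping are correct. However, the one step you explicitly leave open --- the lower bound of order $\ell^2/m$ on the number of edges between different root-subtrees --- is precisely the heart of the proof, and the route you sketch for it (contract each monochromatic run to a vertex and invoke \Cref{thm:lower_bound_nd} on the contracted structure) does not go through for the reasons you yourself identify: the contracted graph is not traceable in any useful sense, same-part runs look adjacent after contraction without contributing inter-part edges, and refining runs at colour changes inflates $R$ above the number $\ell$ of part-boundary cuts, so your intra-part Jensen bound (in terms of $R$) and your inter-part bound (which can only be in terms of $\ell$) no longer balance against the same quantity. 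This is a genuine gap, not a routine detail.

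The paper closes it without any contraction. Keep the segments defined only by subtree boundaries (no colour refinement), and look at the right endpoints of every \emph{second} segment; call this set $X$, so $|X|=\ell/2$, and partition $X$ by colour into $X_1,\dots,X_m$. Each $v\in X_j$ has a path-successor lying in a different subtree, hence at distance $2d$ in $T$, so the edge to the successor certifies a triple $(j,\cdot,d)\in S$ that then forces \emph{all} colour-$j$/colour-$(\cdot)$ pairs across distinct subtrees to be edges. Now a two-case analysis on how $X_j$ distributes over the subtrees finishes it: if some subtree contains at least half of $X_j$, that half together with its successors (which all lie in other subtrees, and are disjoint from $X_j$ because only every second segment was used) spans a complete bipartite graph with at least $|X_j|^2/4$ edges; otherwise every successor lies in a subtree containing fewer than half of $X_j$ and is therefore adjacent to at least $|X_j|/2$ vertices of $X_j$, again giving $\Omega(|X_j|^2)$ edges. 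Summing over colours with Jensen yields $\sum_j |X_j|^2/4 \ge |X|^2/(4m) = \ell^2/(16m)$ inter-part edges, which is exactly the bound your balancing step needs. You would need to supply an argument of this kind (or an equivalent one) before your proof is complete.
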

\begin{proof}
    We proceed by induction on $d$. For $d=1$, $G$ has neighborhood diversity at most $m$ so, by \Cref{thm:lower_bound_nd}, the graph has at least $\frac{n^2}{4m}\geq \frac{n^2}{16m}$ edges.

    For the inductive step, consider the root of the tree-model $r$ and let $T_1,\ldots, T_s$ be subtrees corresponding to the children of $r$, $s\geq 2$. 
    Let $P$ be a Hamiltonian path of $G$.
    Cut the Hamiltonian path $P$ into $\ell\geq 2$ segments of sizes $n_1,n_2,\ldots,n_{\ell}$ by cutting an edge if its endpoints belong to different subtrees among $T_1,\ldots, T_s$. Let $v_1,v_2,\ldots,v_{2\ell}$ be the endpoints of the segments. Note that it might happen that (e.g.) $v_{2i-1}=v_{2i}$ if the segment consists of a single vertex. Consider the right endpoints of every second segment starting from the first one, i.e., the vertices $X=\{v_2,v_6,v_{10},\ldots,\}$.
    For $j\in[m]$ let $X_j$ be the set of vertices in $X$ with label $j$. 
    For each $v_k\in X_j$ note that the vertex $v_{k+1}$ is, by definition, in a different subtree than $v_k$. 
    Partition $X_j$ by the subtrees $T_1,\ldots, T_s$ in which the vertices occur, i.e., $X_j = X_j^1\cup \cdots X_j^s$. 
    We now distinguish two cases.

    \begin{description}
        \item[Case 1] There is $t\in[s]$ such that $|X_j^t|\geq \frac{1}{2}\cdot |X_j|$. In this case, since all successors of vertices in $X_j^t$ are in different subtrees, there is a complete bipartite graph between the vertices in $X_j^t$ (one part) and the set of successors of vertices in $X_j^t$, formally the set $Y=\{v_{k+1} \mid v_k \in X_j^t\}$. Note that since we chose every second segment, we have $Y\cap X_j^t=\emptyset$, hence there are at least $\frac{|X_j|^2}{4}$ edges.

        \item[Case 2] For all $t\in[s]$ we have $|X_j^t|<\frac{1}{2}\cdot |X_j|$. In this case we consider the successors $Y$ of vertices in whole $X_j$, i.e., $Y=\{v_{k+1}\mid v_k\in X_j\}$. By the same argument as in the previous case, $Y\cap X_j=\emptyset$. Consider a vertex $v\in Y$ and suppose that it is in subtree $T_t$. Note that less than $\frac{1}{2}\cdot |X_j|$ vertices are in $T_t$, hence there are at least $|X_j|-\frac{1}{2}|X_j|=\frac{1}{2}|X_j|$ vertices from $X_j$ that are not in $T_t$. By definition of shrub-depth, $v$ is connected to at least this many vertices. Summing for each $v\in Y$ we obtain $\frac{|X_j|^2}{2}$ edges in this case.
    \end{description}

    In both cases we obtain at least $\frac{|X_j|^2}{4}$ edges. By summing over $j$, we obtain at least $\sum_{j=1}^m\frac{|X_j|^2}{4}\geq m \cdot \frac14 \cdot \left(\frac{|X|}{m}\right)^2= \frac{|X|^2}{4m}$ edges, where the inequality is due to Jensen's inequality applied to the convex function $x\mapsto x^2$. Since $|X|=\frac{\ell}{2}$ we obtain at least $\frac{\ell^2}{16m}$ edges between different subtrees from $T_1,\ldots, T_s$. We apply induction hypothesis to the $\ell$ segments and $d-1$ and count the edges inside the subtrees $T_1,\ldots, T_s$. For a segment of length $n_i$ we have at least $\frac{1}{16m}n_i^{1+\frac{1}{2^{d-1} - 1}} = \frac{1}{16m}n_i^{\frac{2^{d-1}}{2^{d-1}-1}}$ edges. The total number of edges is thus at least 

    \begin{equation}\label{eq:sd_bound}
        \frac{\ell^2}{16m}+\sum_{i=1}^{\ell}\frac{1}{16m}n_i^{\frac{2^{d-1}}{2^{d-1}-1}}\geq 
        \frac{1}{16m}\left(\ell^2+\ell\left(\frac{n}{\ell}\right)^{\frac{2^{d-1}}{2^{d-1}-1}}\right)=
        \frac{1}{16m}\left(\ell^2+\frac{n^{\frac{2^{d-1}}{2^{d-1}-1}}}{\ell^{\frac{1}{2^{d-1}-1}}}\right),
    \end{equation}
    where we applied the Jensen's inequality to the convex function $n\mapsto n^{\frac{2^{d-1}}{2^{d-1}-1}}$.
    We lower bound the parenthesis as follows. Note that one summand is increasing in $\ell$ while the other is decreasing in $\ell$, hence the sum is lower bounded by the value attained by any of them when they are equal, that is, when $\ell^2 = \frac{n^{\frac{2^{d-1}}{2^{d-1}-1}}}{\ell^{\frac{1}{2^{d-1}-1}}}$, which is precisely when 
    \[\ell = n^{\frac{\frac{2^{d-1}}{2^{d-1}-1}}{2+\frac{1}{2^{d-1}-1}}}=n^{\frac{2^{d-1}}{2(2^{d-1}-1)+1}}=n^{\frac{2^{(d-1)}}{2^d-1}}.\] By plugging back we lower bound the last expression in (\ref{eq:sd_bound}) by
    $\frac{1}{16m} \cdot n^{\frac{2^d}{2^d-1}} = \frac{1}{16m} \cdot n^{1+\frac{1}{2^d - 1}}$, 
    which was to be shown.
\end{proof}

\begin{theorem}\label{thm:upper_bound_sd}
For any $d$ and infinitely many $n$, there is a traceable graph~$G$ admitting $(d, 2^{d-1})$-tree-model with $n$ vertices and at most $2^{O(d)} \cdot n^{1+\frac{1}{2^d-1}}$ edges.
\end{theorem}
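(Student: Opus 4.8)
The plan is to construct $G$ recursively, reverse-engineering the extremal split that drives the lower bound of \Cref{thm:lower_bound_sd}: there the optimum was attained by cutting the Hamiltonian path into $\ell\approx n^{2^{d-1}/(2^d-1)}$ segments, each itself a traceable graph of depth $d-1$. The base case is $d=1$, where I take $G=K_n$: it is traceable, admits a trivial $(1,1)$-tree-model, and has $\binom{n}{2}\le\tfrac12 n^{2}=\tfrac12 n^{1+1/(2^1-1)}$ edges. For the inductive step I take $\ell$ disjoint copies $H_1,\dots,H_\ell$ of a depth-$(d-1)$ graph $G_{d-1}$ on $s$ vertices supplied by induction, so $n=\ell s$, maintaining the invariant that $G_{d-1}$ comes with a Hamiltonian path whose two endpoints I call its \emph{portals}. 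I then add edges making the set of all $2\ell$ portals (taken across distinct copies) into a clique, and I designate the two extreme portals $a_1$ and $b_\ell$ as the portals of the new graph $G_d$.

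Traceability is immediate: inside each copy $H_i$ I follow its Hamiltonian path from portal $a_i$ to portal $b_i$, and I hop from $b_i$ to $a_{i+1}$ along one of the freshly added inter-copy portal edges, producing a Hamiltonian path of $G_d$ with endpoints $a_1$ and $b_\ell$, exactly as the invariant demands.

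The heart of the argument, and the step I expect to be the main obstacle, is exhibiting a $(d,2^{d-1})$-tree-model that realises precisely this graph, since naively ``recolouring'' the portals could corrupt the edges inside the copies. I resolve this with a doubled palette. I let the colour of a vertex be the bit-string $(b_2,\dots,b_d)\in\{0,1\}^{d-1}$ (hence $2^{d-1}$ colours), where $b_\ell=1$ records that the vertex is a portal \emph{at level $\ell$}, i.e. an endpoint of the Hamiltonian path of the copy of $G_{\ell-1}$ containing it within the level-$\ell$ recursion; previously assigned coordinates are inherited unchanged. The tree $T$ is the obvious one, a root with $\ell$ subtrees each being the height-$(d-1)$ tree-model of a copy of $G_{d-1}$, so $T$ has height $d$. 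The signature rule is: all pairs are adjacent at level $1$ (realising the base cliques), while for $2\le\ell\le d$ a pair is adjacent iff both endpoints carry coordinate $b_\ell=1$. The crucial property is that this rule is \emph{blind to higher coordinates}: whether two vertices diverging at level $\ell\ge 2$ are adjacent depends only on coordinate $b_\ell$ (and level $1$ is universal), so appending the top coordinate $b_d$ when forming $G_d$ leaves every intra-copy adjacency untouched, while at the top level $\ell=d$ it produces exactly the clique among portals of distinct copies and no other inter-copy edge. Symmetry of $S$ and agreement with the inductive tree-model of each $G_{d-1}$ are then immediate.

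It remains to count edges and fix the sizes. Writing $E(d,n)$ for the edge count, the construction gives $E(d,n)\le \ell\,E(d-1,s)+2\ell^2$ with $n=\ell s$, since there are at most $\binom{2\ell}{2}\le 2\ell^2$ inter-copy portal edges. Choosing $\ell=n^{2^{d-1}/(2^d-1)}$ and $s=n/\ell=n^{(2^{d-1}-1)/(2^d-1)}$ balances the two terms: a short bookkeeping of exponents shows that both $\ell\cdot s^{\,1+1/(2^{d-1}-1)}$ and $\ell^2$ equal $n^{1+1/(2^d-1)}$, so the recursion for the implied constant is merely $C_d\le C_{d-1}+2$, giving $C_d=O(d)\subseteq 2^{O(d)}$. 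To make every $\ell$ and $s$ an integer simultaneously at all depths I would take $n=t^{2^d-1}$ for $t=1,2,\dots$, which yields $\ell=t^{2^{d-1}}$ and $s=t^{2^{d-1}-1}$ at the top level and reproduces a size of the same form one level down, supplying the promised infinite family of admissible $n$. The only genuinely delicate points are the exponent arithmetic in this balancing step and the ``blindness'' verification of the tree-model above.
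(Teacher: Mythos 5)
Your construction is essentially the paper's: both build the depth-$d$ graph from $\approx n^{2^{d-1}/(2^d-1)}$ copies of the depth-$(d-1)$ graph, chain them by making the Hamiltonian-path endpoints of distinct copies mutually adjacent at the root level, and realise this in a $(d,2^{d-1})$-tree-model by doubling the palette to mark endpoints (your explicit bit-string colouring is just the unrolled form of the paper's per-step doubling, and the ``blindness'' you verify is exactly the paper's relation~\eqref{eq:sd-relation}). The only difference is bookkeeping: by restricting to $n=t^{2^d-1}$ you avoid the paper's floors and ceilings and get a constant $O(d)$ rather than $2^{O(d)}$, which is a mild improvement and perfectly adequate since only infinitely many $n$ are required.
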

\begin{proof}
We prove by induction on~$d$ that for every $n \ge 1$, there is a traceable graph with at least $n$ vertices and at most $\gamma_d \cdot n^{1+\frac{1}{2^d-1}}=\gamma_d \cdot n^{2^d/(2^d-1)}$ edges that admits a $(d, 2^{d-1})$-tree-model where the constants $\gamma_d$ are given by the recurrence $\gamma_d = 4\cdot \gamma_{d-1}+32$ with initial condition $\gamma_1 = 1$.
Notice that the bound on the number of edges in the statement is with respect to~$n$ even though the graph can contain more than~$n$ vertices.
Therefore, we can restrict such traceable graph to its induced traceable subgraph on exactly~$n$ vertices without violating the edge bound.
This implies the statement since the recurrence solves to $\gamma_d = \frac{1}{12}(35\cdot 4^d - 128) = 2^{O(d)}$.

For $d = 1$, the inequality holds since any graph on $n$ vertices has less than~$n^2$ edges and we can, thus, simply take a clique which admits a trivial $(1,1)$-tree-model.
Let $ d > 1$ and set $n' = \lfloor n^{(2^{d-1} - 1)/(2^d - 1)} \rfloor$.
By induction hypothesis, there is a traceable graph~$G'$ on at least $n'$ vertices with at most $\gamma_{d-1} \cdot (n')^{2^{d-1}/(2^{d-1}-1)}$ edges that admits a $(d-1, 2^{d-2})$-tree-model $(T',S')$.
By plugging in the inequality $n' \le n^{(2^{d-1} - 1)/(2^d - 1)}$, we obtain
\begin{equation}\label{eq:sd-edges-induction}
|E(G')| \le \gamma_{d-1} \cdot (n')^{2^{d-1}/(2^{d-1}-1)} \le \gamma_{d-1} \cdot \left(n^{(2^{d-1} - 1)/(2^d - 1)}\right)^{2^{d-1}/(2^{d-1}-1)} = \gamma_{d-1} \cdot n^{2^{d-1}/(2^d - 1)}.
\end{equation}

Our goal is to take $\lceil n / n' \rceil$ many copies of $G'$ and join them to obtain a traceable graph on at least $n$ vertices.
First, we double in size the set of available colours.
In order to simplify the argument, let us work from now on with colours from the set $\{0,1\} \times [2^{d-2}]$ instead of the set $[2^{d-1}]$.
We define a signature $S \subseteq (\{0,1\} \times [2^{d-2}])^2 \times [d]$.
For every $\ell < d$, the signature contains exactly the tuples that appear in~$S'$, ignoring the first coordinate of the colours, i.e. we have for every $\alpha_1, \alpha_2 \in \{0,1\}$ and $\ell < d$
\begin{equation}\label{eq:sd-relation}
((\alpha_1, i), (\alpha_2, j), \ell) \in S \iff  (i, j, \ell) \in S'.
\end{equation}
At the root level, the signature contains exactly all possible pairs of colours with their first coordinate equal to~1.
That is, we additionally let $S$ contain the tuples $((1, i), (1, j), d)$ for every choice of $i, j \in [2^{d-2}]$.

We now define a $(d-1, 2^{d-1})$-tree-model $(T'',S)$ of $G'$ that allows us to distinguish the endpoints of the Hamiltonian path.
The tree~$T''$ is exactly the same as~$T'$ except for the assignment of colours to its leaves.
Let $f\colon V(G') \to [2^{d-2}]$ be the assignment of colours to vertices of~$G'$ (as leaves) in~$T'$.
Then $v \in V(G')$ receives in~$T''$ the colour $(1, f(v))$ if it is one of the two endpoints of the Hamiltonian path, and $(0, f(v))$ otherwise.
Observe that $(T'', S)$ is indeed a tree model of $G'$ by~\eqref{eq:sd-relation} since $T''$ has depth $d-1$.

We now let $G$ be the graph defined by a tree model $(T,S)$ where $T$ consists of a root with $\lceil n / n' \rceil$ subtrees, each a copy of~$T''$.
Notice that $G$ is traceable graph since the endpoints of all individual Hamiltonian paths in the subtrees are connected at the root level into a clique.
For technical reasons, let us first observe the following upper bound on $\lceil n / n' \rceil$
\begin{equation}\label{eq:sd-num-subtrees}
\left\lceil \frac{n}{n'} \right\rceil \le 2 \cdot \frac{n}{n'} \le 2 \cdot \left(\frac{2 n}{n^{(2^{d-1} - 1)/(2^d - 1)}}\right) \le 4 \cdot n^{2^{d-1}/(2^d -1)}
\end{equation}
where the first inequality holds since $n / n' \ge 1$ and the second follows from the fact that $n^{(2^{d-1} - 1)/(2^d - 1)} > 1$ and thus, $n' = \lfloor n^{(2^{d-1} - 1)/(2^d - 1)} \rfloor\ge n^{(2^{d-1} - 1)/(2^d - 1)} / 2$.

Now we separately bound the number of edges within the copies of $G'$ and the edges introduced at the root level.
There are $\lceil n / n' \rceil$ copies of~$G'$ in total, each with at most $\gamma_{d-1} \cdot n^{2^{d-1}/(2^d - 1)}$ edges by~\eqref{eq:sd-edges-induction}.
The number of edges contained within them is thus at most
\begin{equation}\label{eq:sd-inner-edges}
\left\lceil \frac{n}{n'} \right\rceil \cdot \gamma_{d-1} \cdot n^{2^{d-1}/(2^d - 1)} \le 4 \cdot n^{2^{d-1}/(2^d -1)} \cdot \gamma_{d-1} \cdot n^{2^{d-1}/(2^d - 1)} = 4\cdot \gamma_{d-1} \cdot n^{2^d/(2^d -1)} %
\end{equation}
where the first inequality follows by applying~\eqref{eq:sd-num-subtrees}.

It remains to bound the number of extra edges introduced in the root of~$T$.
There are $\lceil n / n' \rceil$ subtrees, each containing only two leaves coloured by~$(1,i)$ for some~$i \in [2^{d-2}]$.
Thus the number of edges introduced is at most
\begin{equation}\label{eq:sd-outer-edges}
\binom{2 \cdot \lceil n / n' \rceil}{2} \le \frac{4 \cdot  \lceil n / n' \rceil^2}{2} \le 2 \cdot \left(4 \cdot n^{2^{d-1}/(2^d -1)}\right)^2 = 32 \cdot \left(n^{2^{d}/(2^d -1)}\right) %
\end{equation}
where we first applied the inequality $\binom{t}{2} \le \frac{t^2}{2}$, followed by an application of~\eqref{eq:sd-num-subtrees}.

Adding \eqref{eq:sd-inner-edges} and~\eqref{eq:sd-outer-edges} together, we obtain the desired bound on the number of edges in~$G$
\[|E(G)| \le  (4\cdot\gamma_{d-1} + 32)\cdot n^{2^{d}/(2^d -1)} = \gamma_d \cdot n^{2^{d}/(2^d -1)}.\qedhere\]
\end{proof}

\subsection{Distance to $\Pi$}\label{subsec:distance_to_pi}

We start this subsection with a general framework to prove lower bounds classes characterized by distance to $\Pi$ based on the lower bound for $\Pi$ itself.
Next we present two applications of the framework.
Then we complement it with similarly general framework to provide matching upper bounds.

\begin{theorem}\label{thm:transforming_to_distance_to_pi}
    Let $\Pi$ be a hereditary graph class. Suppose there is a constant $n_0$ such that every traceable $G\in \Pi$ on $n\geq n_0$ vertices contains at least $n\cdot \varphi(n)$ edges, where the function $n\mapsto n\cdot \varphi(n)$ is convex and $\varphi$ is unbounded and nondecreasing. For any $d\geq 0$, let $\Gamma_d$ be the class of graphs $G$ with $\operatorname{dist}_{\Pi}(G)\leq d$. Then every traceable $G\in \Gamma_d$ with $n\geq 2n_0\cdot \max\{d,1\}$ vertices contains at least $\frac{1}{2}n\cdot \varphi\left(\frac{n}{2(d+1)}\right)$ edges.
\end{theorem}
\begin{proof}
    If $d=0$, then $\Pi=\Gamma_d$ and if $G$ has at least $2n_0\geq n_0$ vertices, then it contains at least $n\cdot \varphi(n)\geq \frac{1}{2}n\cdot \varphi\left(\frac{n}{2}\right)$ edges, because $\varphi$ is nondecreasing.

    Assume $d\geq 1$. Let $G\in \Gamma_d$ be a graph with $n\geq 2n_0d$ vertices and let $S\subseteq V(G),|S|\leq d$ be a modulator to $\Pi$. Let $P$ be a Hamiltonian path of $G$. The modulator splits $P$ into $\ell \leq d + 1$ segments $P_1,\ldots,P_\ell$ with $n-|S|\geq n - d$ vertices in total. Denote $G_i=G[V(P_i)]$ and $p_i=|V(P_i)|$. Up to reordering, suppose that first $\ell'$ segments contain at least $n_0$ vertices. Note that, as the other segments contain at most $n_0-1$ vertices, we have $\ell'\geq 1$ as otherwise we would have 
    \[ n\leq (n_0-1)\ell + d\leq (n_0-1)(d+1)+d=n_0(d+1)-1\leq 2n_0d-1,\] contradicting $n\geq 2n_0d$. Hence $\ell - \ell' \leq d$. 
    By assumption, if $p_i\geq n _0$, then $|E(G_i)|\geq p_i\varphi(p_i)$. Applying Jensen's inequality to the convex function $x\mapsto x\cdot \varphi(x)$, we obtain the following estimate on the number of edges in $G$:
    \[
    |E(G)| \geq 
    \sum_{i=1}^{\ell'} |E(G_i)| \geq 
    \sum_{i=1}^{\ell'} p_i\varphi(p_i) \geq 
    \ell' \cdot \frac{\sum_{i=1}^{\ell'} p_i}{\ell'} \varphi\left(\frac{\sum_{i=1}^{\ell'} p_i}{\ell'}\right)\geq
    (n-dn_0)\cdot \varphi\left(\frac{n-dn_0}{\ell'}\right)\geq 
    \frac{1}{2}n\cdot \varphi\left(\frac{n}{2(d+1)}\right)
    \]
    The penultimate inequality follows from the fact that $\varphi$ is nondecreasing along with the bound $\sum_{i=1}^{\ell'}p_i\geq n - d-(\ell-\ell')\cdot (n_0-1)\geq n - dn_0$.    
    Finally, the last inequality follows from the fact that $\ell'\leq \ell \leq d + 1$ and that $n-dn_0\geq \frac{n}{2}$ because $n\geq 2dn_0$. 
    This completes the proof.
\end{proof}

\begin{corollary}\label{cor:lower_bound_cvdn}
    Let $G$ be a traceable graph on $n\geq 4 \cdot \max \{\cvdn(G),1\}$ vertices. Then $G$ contains at least $\frac{1}{16}\cdot \frac{n^2}{\cvdn(G)+1}$ edges.
\end{corollary}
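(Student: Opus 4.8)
The plan is to derive this corollary directly from \Cref{lem:transforming_to_distance_to_pi} by instantiating $\Pi$ with the class of cluster graphs and $\alpha$ with $\cvdn(G)$. First I would verify the two structural hypotheses of the lemma. The class of cluster graphs is hereditary: deleting a vertex from a disjoint union of cliques leaves a disjoint union of cliques, so every induced subgraph of a cluster graph is again a cluster graph.

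The crucial observation, which is exactly the warmup example from the introduction, is to pin down the density of \emph{traceable} cluster graphs. A cluster graph is a disjoint union of cliques, so if it has more than one component it is disconnected and hence cannot contain a Hamiltonian path. Therefore any traceable cluster graph on $n$ vertices must consist of a single clique, i.e.\ it is the complete graph $K_n$ with $\binom{n}{2}$ edges. For $n \ge 2$ we have $\binom{n}{2} = \frac{n(n-1)}{2} \ge \frac{n^2}{4}$, since $n-1 \ge \frac{n}{2}$. This supplies the hypothesis of \Cref{lem:transforming_to_distance_to_pi} with constants $D = \frac14$ and $n_0 = 2$.

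With these constants in hand, the class $\Gamma_\alpha$ of graphs at distance at most $\alpha$ from cluster graphs is precisely the class of graphs with $\cvdn \le \alpha$. Setting $\alpha = \cvdn(G)$, the threshold $n \ge 2 n_0 \cdot \max\{\alpha, 1\} = 4 \cdot \max\{\cvdn(G), 1\}$ matches the hypothesis of the corollary exactly, and the lemma yields at least $\frac{D}{4} \cdot \frac{n^2}{\alpha+1} = \frac{1}{16}\cdot \frac{n^2}{\cvdn(G)+1}$ edges, as claimed.

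I do not anticipate a genuine obstacle here: once the lemma is established and the identification of traceable cluster graphs with complete graphs is made, the corollary is an immediate specialization. The only point requiring a moment of care is confirming that the numerical thresholds and constants line up ($n_0 = 2$ giving the factor $4$ in the vertex bound and the factor $\frac{1}{16}$ in the edge bound), but this is routine arithmetic rather than a conceptual difficulty.
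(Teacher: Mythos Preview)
Your proposal is correct and follows exactly the same approach as the paper: observe that a traceable cluster graph is necessarily a complete graph, so for $n\ge 2$ it has $\binom{n}{2}\ge \frac{n^2}{4}$ edges, and then apply \Cref{lem:transforming_to_distance_to_pi} with $D=\tfrac14$ and $n_0=2$. The paper's proof is just a terser version of what you wrote.
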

\begin{proof}
    Any traceable cluster graph is a complete graph, hence for $n\geq 2$ vertices, it contains $\frac{n(n-1)}{2}\geq \frac{n^2}{4}$ edges. By \Cref{thm:transforming_to_distance_to_pi} for $\varphi(n)=\frac{n}{4}$, if $G$ has at least $4\cdot \max \{\cvdn(G),1\}$ vertices, then it contains at least $\frac{1}{16}\cdot \frac{n^2}{\cvdn(G)+1}$ edges.
\end{proof}

\begin{corollary}\label{cor:lower_bound_dtcog}
    Let $\Pi$ be the class of cographs.
    Let $G$ be a graph on $n\geq 2\cdot \max \{\operatorname{dist}_{\Pi}(G),1\}$ vertices. Then $G$ contains at least $\frac{1}{8}\cdot n\log_2 \frac{n}{2(\operatorname{dist}_\Pi(G)+1)}$ edges.
\end{corollary}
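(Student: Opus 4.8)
The plan is to obtain \Cref{cor:lower_bound_dtcog} as an immediate application of the logarithmic variant of \Cref{lem:transforming_to_distance_to_pi} recorded in the remark directly following that lemma, instantiated with $\Pi$ taken to be the class of cographs (and with $G$ traceable, as in the lemma). Concretely, that variant says: if every traceable graph in a hereditary class $\Pi$ on $n \ge n_0$ vertices has at least $Dn\log_2 n$ edges, then every traceable $G$ of distance at most $\alpha$ to $\Pi$ with $n \ge 2n_0\max\{\alpha,1\}$ vertices has at least $\tfrac{D}{2}\,n\log_2\!\bigl(\tfrac{n}{2(\alpha+1)}\bigr)$ edges. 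Everything then reduces to checking the hypotheses for cographs and reading off the constants.

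First I would verify the input hypotheses. Cographs are exactly the $P_4$-free graphs, and $P_4$-freeness is preserved under taking induced subgraphs, so $\Pi$ is hereditary as required. By \Cref{thm:lower_bound_cograph}, every traceable cograph has at least $\tfrac14\,n\log_2 n$ edges, so the logarithmic hypothesis holds with $D = \tfrac14$ and I would take $n_0 = 2$. To justify $n_0 = 2$ I would check the small cases not covered beyond the trivial base of \Cref{thm:lower_bound_cograph}: for $n=2$ the bound is $\tfrac12$ and $K_2$ has one edge; for $n=3$ the sparsest traceable cograph $P_3$ has two edges against a bound below $2$; and for $n=4$ the bound equals $2$, while $P_4$ itself is not a cograph, so any traceable cograph on four vertices needs an extra chord and thus at least four edges. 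Hence $\tfrac14\,n\log_2 n$ is a valid lower bound from $n_0=2$ onward (indeed $n_0=1$ would also work, merely weakening the threshold).

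With $D = \tfrac14$, $n_0 = 2$, and $\alpha = \operatorname{dist}_\Pi(G)$, the remark's variant applies whenever $n \ge 2n_0\max\{\alpha,1\} = 4\max\{\operatorname{dist}_\Pi(G),1\}$, which is exactly the hypothesis of the corollary, and it yields at least $\tfrac{D}{2}\,n\log_2\!\bigl(\tfrac{n}{2(\alpha+1)}\bigr) = \tfrac18\,n\log_2\!\bigl(\tfrac{n}{2(\operatorname{dist}_\Pi(G)+1)}\bigr)$ edges. This is precisely the claimed bound, so the corollary follows. The threshold and the constant match with no slack, which is why the specific choice $n_0 = 2$ is convenient.

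The only genuine obstacle is that the remark states its logarithmic variant without proof, so I would need to confirm it truly produces the stated constants. This means re-running the proof of \Cref{lem:transforming_to_distance_to_pi} with the convex function $x \mapsto x\log_2 x$ in place of $x \mapsto x^2$. Keeping only the $\ell' \le \alpha+1$ long segments $P_i$ (those with at least $n_0$ vertices, whose sizes $p_i$ satisfy $\sum_{i\le \ell'} p_i \ge n - \alpha n_0 \ge n/2$) and discarding the nonnegative contribution of the short ones, Jensen's inequality gives $\sum_{i\le\ell'} p_i\log_2 p_i \ge \bigl(\sum_{i\le\ell'} p_i\bigr)\log_2\!\bigl(\tfrac{\sum_{i\le\ell'} p_i}{\ell'}\bigr) \ge \tfrac{n}{2}\log_2\!\bigl(\tfrac{n}{2(\alpha+1)}\bigr)$, using $\sum_{i\le\ell'} p_i \ge n/2$ in the leading factor and both $\sum_{i\le\ell'} p_i \ge n/2$ and $\ell' \le \alpha+1$ inside the logarithm. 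Multiplying by $D$ recovers the factor-$\tfrac12$ loss and hence the bound $\tfrac{D}{2}\,n\log_2\!\bigl(\tfrac{n}{2(\alpha+1)}\bigr)$ asserted in the remark, completing the argument.
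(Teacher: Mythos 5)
Your proposal is correct and follows essentially the same route as the paper: instantiate the logarithmic variant of \Cref{lem:transforming_to_distance_to_pi} (stated in the remark after that lemma) with $D=\tfrac14$, $n_0=2$, using \Cref{thm:lower_bound_cograph} as the input bound for cographs. Your additional verification of the remark by re-running the lemma's proof with $x\mapsto x\log_2 x$ is something the paper omits, and it checks out.
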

\begin{proof}
    By \Cref{thm:lower_bound_cograph} any traceable cograph on $n\geq 1$ vertices contains at least $\frac{1}{4}n\log_2n$ edges. By applying \Cref{thm:transforming_to_distance_to_pi} for the function $\varphi(n)=\frac{1}{4}\log _2n$ we obtain that if $G$ has at least $2\cdot \max \{\operatorname{dist}_{\Pi}(G),1\}$ vertices, then it contains at least $\frac{1}{8}n\log_2 \frac{n}{2(\operatorname{dist}_\Pi(G)+1)}$ edges.
\end{proof}

\begin{theorem}\label{thm:upper_bound_dist_to_pi_generic}
    Let $\Pi$ be a class closed under disjoint unions and assume that there exists infinitely many traceable graphs $G\in \Pi$ on $k$ vertices with at most $k\cdot \varphi(k)$ edges for some unbounded, nondecreasing function $\varphi\colon\mathbb{R}\to \mathbb{R}$.
    
    Then for each $d\geq 0$ there exists infinitely many graphs $G$ with $\operatorname{dist}_{\Pi}(G) \leq d$ on $n$ vertices and with at most $2 n\cdot \varphi(\frac{n}{d+1})$ edges.
\end{theorem}

\begin{proof}
    For infinitely many (and sufficiently large) $k\geq 1$ we construct a graph $G$ with $n=k(d+1)+d$ vertices with $\operatorname{dist}_{\Pi}(G)\leq d$. Construct $d+1$ identical copies $G_0,G_1,\ldots, G_d$ of traceable graphs from $\Pi$ on $k$ vertices and add $d$ vertices $w_1,\ldots,w_d$ and connect $w_i$ with the endpoints of the Hamiltonian paths in $G_{i-1}$ and $G_i$ by an edge. Clearly $G$ contains a Hamiltonian Path and $G\setminus \{w_1,\ldots, w_d\}\in \Pi$ since each $G_i$ is from $\Pi$ and $\Pi$ is closed under disjoint unions. The number of edges of $G$ is at most $(d+1)\cdot k\cdot \varphi(k)+2d\leq 2n\cdot \varphi(\frac{n}{d+1})$. We used the fact that $k\leq \frac{n}{d+1}$ and that $2d \leq n\cdot \varphi(\frac{n}{d+1})$ for $n$ large enough. 
\end{proof}

\begin{corollary}\label{cor:upper_bound_cvdn}
    For any given $d\geq 0$ and infinitely many $n$ there is a traceable graph $G$ on $n$ vertices with $\cvdn(G)\leq d$ and at most $2\cdot \frac{n^2}{d+1}$ edges.
\end{corollary}

\begin{corollary}\label{cor:upper_bound_dtcog}
    Let $\Pi$ be the class of cographs. For any given $d\geq 0$ and infinitely many $n$ there is a traceable graph $G$ on $n$ vertices with $\operatorname{dist}_{\Pi}(G)\leq d$ and at most $2n\log \frac{n}{d + 1}$ edges.
\end{corollary}

\section{Conclusion}
In this paper, we bound the minimum number of edges of traceable graphs that belong to several structured classes of  graphs. We show that the bound is quadratic for the class of bounded neighborhood diversity and bounded cluster vertex deletion number, $n\log n$ for bounded modular-width and slightly superlinear for the class of bounded shrub-depth.
Together with the trivial linear bounds for distance to path (and therefore, e.g., treewidth) and for domination number and with the constant bound on the order of traceable graphs of bounded treedepth, gives more or less complete picture for vast majority of popular graph width parameters.

We note that our bounds can easily be adapted to Hamiltonian graphs (i.e., graphs containing a Hamiltonian cycle) -- each such graph is also traceable. We did not find a  class where considering a Hamiltonian cycle instead of Hamiltonian path would make a difference. For example, the path on $n$ vertices is a chordal graph. However, requiring a Hamiltonian cycle instead does not improve the lower bound significantly.  The triangulated $2\times n$ grid has $O(n)$ edges, is chordal, and has a Hamiltonian cycle (in fact, it is even a proper interval graph).

Our bounds do not hold if we instead of a Hamiltonian path require the graph to be Eulerian.
Intuitively, in the case of Eulerian graphs, we lose the property that every vertex has a unique successor on the path, so we are able to obtain smaller upper bounds. In particular, for even $n$, the complete bipartite graph $K_{n-2,2}$ is Eulerian and has linearly many edges. The class of stars shows that also just connectivity is not enough to obtain non-trivial bounds.

One possible direction for further research is to consider traceable graphs that are $H$-free for some fixed graph $H$. Note that if $H$ contains a vertex of degree at least 3 or a cycle, the class of $H$-free graphs contains the path on $n$ vertices for infinitely many $n$, so the bound becomes $\Theta(n)$. The remaining case is when $H$ is a disjoint union of paths. For $H=P_3$ we obtain precisely the class of clusters for which the bound is $\Theta(n^2)$. For $H=P_4$, we obtain precisely the class of cographs, where the bound is $\Theta(n\log n)$ but the case $H=P_k$ for $k\geq 5$ remains open. On the other hand, $H=dP_1$ corresponds to graphs with no independent set on $d$ vertices, and $H=dP_2$ corresponds to graphs with no induced matching of size $d$ (i.e., $\mim < d$). The case of $H=dP_k$ for $k\geq 3$ remains open. Note that if a graph is $dP_k$-free, then it is also $P_{d(k+1)}$-free (but not vice versa).

\bibliography{references}

\begin{thebibliography}{10}
\providecommand{\url}[1]{\texttt{#1}}
\providecommand{\urlprefix}{URL }
\expandafter\ifx\csname urlstyle\endcsname\relax
  \providecommand{\doi}[1]{doi:\discretionary{}{}{}#1}\else
  \providecommand{\doi}{doi:\discretionary{}{}{}\begingroup
  \urlstyle{rm}\Url}\fi

\bibitem{Bollobas1968onsaturatedgraphs}
B.~Bollobás, \textit{On saturated graphs}, Acta Mathematica Academiae
  Scientiarum Hungarica \textbf{19} (1968), no. 3-4, 435--455.

\bibitem{ErdosHajnalMoon1964}
P.~Erdős, A.~Hajnal, and J.~W. Moon, \textit{A problem in graph theory}, The
  American Mathematical Monthly \textbf{71} (1964), no.~10, 1107--1110.
  \urlprefix\url{https://doi.org/10.2307/2311408}.

\bibitem{ErdosStone1946}
P.~Erdős and A.~H. Stone, \textit{On the structure of linear graphs}, Bulletin
  of the American Mathematical Society \textbf{52} (1946), no.~12, 1087--1091.

\bibitem{FanW15satlinforests}
Q.~Fan and C.~Wang, \textit{Saturation numbers for linear forests ${P}_5 \cup
  t{P}_2$}, Graphs and Combinatorics \textbf{31} (2015), no.~6, 2193--2200.
  \urlprefix\url{https://doi.org/10.1007/s00373-014-1514-1}.

\bibitem{FaudreeFGJ09saturationtrees}
J.~R. Faudree, R.~J. Faudree, R.~J. Gould, and M.~S. Jacobson,
  \textit{Saturation numbers for trees}, Electronic Journal of Combinatorics
  \textbf{16} (2009), no.~1, article no. R91.
  \urlprefix\url{https://doi.org/10.37236/180}.

\bibitem{FaudreeG13satnearlycomplete}
R.~J. Faudree and R.~J. Gould, \textit{Saturation numbers for nearly complete
  graphs}, Graphs and Combinatorics \textbf{29} (2013), no.~3, 429--448.
  \urlprefix\url{https://doi.org/10.1007/s00373-011-1128-9}.

\bibitem{FerraraJMTW12saturationsubdivisions}
M.~Ferrara, M.~S. Jacobson, K.~G. Milans, C.~Tennenhouse, and P.~S. Wenger,
  \textit{Saturation numbers for families of graph subdivisions}, Journal of
  Graph Theory \textbf{71} (2012), no.~4, 416--434.
  \urlprefix\url{https://doi.org/10.1002/jgt.21625}.

\bibitem{FurediK13cyclesat}
Z.~F{\"{u}}redi and Y.~Kim, \textit{Cycle-saturated graphs with minimum number
  of edges}, Journal of Graph Theory \textbf{73} (2013), no.~2, 203--215.
  \urlprefix\url{https://doi.org/10.1002/jgt.21668}.

\bibitem{GajarskyLO2013}
J.~Gajarsk{\'{y}}, M.~Lampis, and S.~Ordyniak, \textit{Parameterized algorithms
  for modular-width}, G.~Z. Gutin and S.~Szeider (eds.), \textit{Parameterized
  and Exact Computation - 8th International Symposium, {IPEC} 2013, Sophia
  Antipolis, France, September 4-6, 2013, Revised Selected Papers},
  \textit{Lecture Notes in Computer Science}, vol. 8246, Springer, 2013,
  163--176, \doi{10.1007/978-3-319-03898-8\_15}.
  \urlprefix\url{https://doi.org/10.1007/978-3-319-03898-8\_15}.

\bibitem{Ganian2017ShrubdepthCH}
R.~Ganian, P.~Hliněný, J.~Nešetřil, J.~Obdržálek, and P.~O. de~Mendez,
  \textit{Shrub-depth: Capturing height of dense graphs}, Logical Methods in
  Computer Science \textbf{15} (2019), no.~1.
  \urlprefix\url{https://doi.org/10.23638/LMCS-15(1:7)2019}.

\bibitem{Jensen1906}
J.~L. W.~V. Jensen, \textit{Sur les fonctions convexes et les
  in{\'e}galit{\'e}s entre les valeurs moyennes}, Acta Mathematica \textbf{30}
  (1906), no.~1, 175--193. \urlprefix\url{https://doi.org/10.1007/BF02418571}.

\bibitem{KaszonyiTuza1986saturatedgr}
L.~Kászonyi and Z.~Tuza, \textit{Saturated graphs with minimal number of
  edges}, Journal of Graph Theory \textbf{10} (1986), no.~2, 203--210.
  \urlprefix\url{https://doi.org/10.1002/jgt.3190100209}.

\bibitem{LvHL23satdisjointstars}
Z.~Lv, Z.~He, and M.~Lu, \textit{Saturation numbers for disjoint stars},
  Journal of Combinatorial Optimization \textbf{45} (2023), no.~1, article no.
  11. \urlprefix\url{https://doi.org/10.1007/s10878-022-00945-z}.

\bibitem{SPARSITY}
J.~Nešetřil and P.~O. de~Mendez, \textit{Sparsity: Graphs, Structures, and
  Algorithms}, 1st edn., 459 pages , Springer Berlin, Heidelberg, 2012.
  \urlprefix\url{https://doi.org/10.1007/978-3-642-27875-4}.

\bibitem{SullivanW17sattripartite}
E.~Sullivan and P.~S. Wenger, \textit{Saturation numbers in tripartite graphs},
  Journal of Graph Theory \textbf{84} (2017), no.~4, 428--442.
  \urlprefix\url{https://doi.org/10.1002/jgt.22033}.

\bibitem{turan1941extremal}
P.~Turán, \textit{On an extremal problem in graph theory}, Matematikai és
  Fizikai Lapok \textbf{48} (1941), 436--452.

\end{thebibliography}

\end{document}